\thanks{\copyright 2018 Texas State University.}
\begin{document}
\title[\hfilneg EJDE-2018/29\hfil
   properties of fractional differentiation operators]
{Spectral properties of fractional differentiation operators}

\author[M. V. Kukushkin
  \hfil EJDE-2018/29\hfilneg]
{Maksim V. Kukushkin}

\address{Maksim V. Kukushkin \newline
International Committee "Continental",
Geleznovodsk 357401, Russia}
\email{kukushkinmv@rambler.ru}

\dedicatory{Communicated by Ludmila S. Pulkina}

\thanks{Submitted October 10, 2017. Published January 29, 2018.}
\subjclass[2010]{47F05, 47F99, 46C05}
\keywords{Fractional derivative; fractional integral;    energetic space;
\hfill\break\indent sectorial operator;
strong accretive operator;}

\begin{abstract}
We consider   fractional differentiation operators in various senses and  show that the strictly accretive property
   is the common property  of fractional differentiation operators. Also we prove that the sectorial property holds for differential operators  second order with a fractional derivative in the  final term, we explore a location of the spectrum and  resolvent sets   and show that the  spectrum  is  discrete. We prove that there exists  a two-sided estimate for eigenvalues of the real component of operators second order with the fractional derivative in  the final term.
\end{abstract}

\maketitle
\numberwithin{equation}{section}
\newtheorem{theorem}{Theorem}[section]
\newtheorem{lemma}[theorem]{Lemma}
\newtheorem{remark}[theorem]{Remark}
\allowdisplaybreaks

\section{Introduction}
It is remarkable that the term accretive,  which applicable to a linear operator $T$ acting in   Hilbert space $H,$ is introduced by Friedrichs in the paper \cite{firstab_lit:fridrichs1958}, and means that the operator $T$ has the following property:   the numerical range $\Theta(T)$ (see \cite[p.335]{firstab_lit:kato1966}) is a subset of the right half-plane i.e.
    $$
    {\rm Re}\left( Tu,u\right)_{H}\geq0,\;u\in \mathfrak{D}(T).
    $$
Accepting  the   notation  of the paper  \cite{firstab_lit:kipriyanov1960} we assume that $\Omega$ is a  convex domain of the  $n$ -  dimensional Euclidean space $\mathbb{E}^{n}$, $P$ is a fixed point of the boundary $\partial\Omega,$
$Q(r,\mathbf{e})$ is an arbitrary point of $\Omega;$ we denote by $\mathbf{e}$   a unit vector having the direction from  $P$ to $Q,$ denote by $r=|P-Q|$   an Euclidean distance between the points $P$ and $Q.$ We use the shorthand notation    $T:=P+\mathbf{e}t,\,t\in \mathbb{R}.$
We   consider the Lebesgue  classes   $L_{p}(\Omega),\;1\leq p<\infty $ of  complex valued functions.  For the function $f\in L_{p}(\Omega),$    we have
\begin{equation}\label{1}
\int\limits_{\Omega}|f(Q)|^{p}dQ=\int\limits_{\omega}d\chi\int\limits_{0}^{d(\mathbf{e})}|f(Q)|^{p}r^{n-1}dr<\infty,
\end{equation}
where $d\chi$   is the element of the solid angle of
the unit sphere  surface (the unit sphere belongs to $\mathbb{E}^{n}$)  and $\omega$ is a  surface of this sphere,   $d:=d(\mathbf{e})$  is the length of the  segment of the  ray going from the point $P$ in the direction
$\mathbf{e}$ within the domain $\Omega.$
Without lose of   generality, we consider only those directions of $\mathbf{e}$ for which the inner integral on the right side of equality \eqref{1} exists and is finite. It is  the well-known fact that  these are almost all directions.
We denote by   ${\rm Lip}\, \mu,\;(0<\mu\leq1) $  the set of functions satisfying the Holder-Lipschitz condition
$$
{\rm Lip}\, \lambda:=\left\{\rho(Q):\;|\rho(Q)-\rho(P)|\leq M r^{\lambda},\,P,Q\in \bar{\Omega}\right\}.
$$
Consider the  Kipriyanov  fractional differential operator     defined in  the paper \cite{firstab_lit:1kipriyanov1960}  by  the formal expression
\begin{equation*}
\mathfrak{D}^{\alpha}(Q)=\frac{\alpha}{\Gamma(1-\alpha)}\int\limits_{0}^{r} \frac{[f(Q)-f(T)]}{(r - t)^{\alpha+1}} \left(\frac{t}{r} \right) ^{n-1} dt+
C^{(\alpha)}_{n} f(Q) r ^{ -\alpha}\!,\, P\in\partial\Omega,
\end{equation*}
where
$
C^{(\alpha)}_{n} = (n-1)!/\Gamma(n-\alpha).
$
In accordance with Theorem 2   \cite{firstab_lit:1kipriyanov1960}, under the assumptions
\begin{equation}\label{2}
 lp\leq n,\;0<\alpha<l- \frac{n}{p} +\frac{n}{q}, \,q>p,
\end{equation}
we have that
     for sufficiently small $\delta>0$ the following inequality holds
\begin{equation}\label{3}
\|\mathfrak{D}^{\alpha}f\|_{L_{q}(\Omega)}\leq \frac{K}{\delta^{\nu}}\|f\|_{L_{p}(\Omega)}+\delta^{1-\nu}\|f\|_{L^{l}_{p}(\Omega)},\, f\in\stackrel{\circ}{W_p ^l} (\Omega),
\end{equation}
where
\begin{equation}\label{4}
 \nu=\frac{n}{l}\left(\frac{1}{p}-\frac{1}{q} \right)+\frac{\alpha+\beta}{l}.
\end{equation}
The constant  $K$ does not depend on $\delta,\,f;$   the point $P\in\partial\Omega ;\;\beta$ is an arbitrarily small fixed positive number.
Further, we assume that $\alpha \in (0,1).$
Using the notation   of the paper  \cite{firstab_lit:samko1987}, we   denote  by $I_{a+}^{\alpha}(L_{p} ),\;I_{b-}^{\alpha}(L_{p} ),\;1\leq p\leq\infty$ the left-side, right-side  classes of functions representable by the fractional integral on the segment $[a,b]$ respectively.  Let $\mathfrak{d}:={\rm diam}\,\Omega ;\;C,C_{i}={\rm const},\,i\in \mathbb{N}_{0}.$ We use a shorthand  notation  $P\cdot Q=P^{i}Q_{i}=\sum^{n}_{i=1}P_{i}Q_{i}$ for the inner product of the points $P=(P_{1},P_{2},...,P_{n}),\,Q=(Q_{1},Q_{2},...,Q_{n})$ which     belong to  $\mathbb{E}^{n}.$
     Denote by  $D_{i}u$  the week  derivative of the function $u$ with respect to a coordinate variable with index   $1\leq i\leq n.$
We  assume that all functions have  a zero extension outside  of $\bar{\Omega}.$  Denote by  $  \mathrm{D} (L), \mathrm{R} (L)$    the domain of definition, range of values of the operator $L$ respectively.
Everywhere further,   unless  otherwise  stated,  we   use the notations of the papers    \cite{firstab_lit:kipriyanov1960}, \cite{firstab_lit:1kipriyanov1960},
\cite{firstab_lit:samko1987}.
Let us  define the operators
$$
 (\mathfrak{I}^{\alpha}_{0+}g)(Q  ):=\frac{1}{\Gamma(\alpha)} \int\limits^{r}_{0}\frac{g (T)}{( r-t)^{1-\alpha}}\left(\frac{t}{r}\right)^{n-1}dt,\,(\mathfrak{I}^{\alpha}_{d-}g)(Q  ):=\frac{1}{\Gamma(\alpha)} \int\limits_{r}^{d }\frac{g (T)}{(t-r)^{1-\alpha}}dt,
$$
$$
\;g\in L_{p}(\Omega),\;1\leq p\leq\infty.
$$
These  operators we call respectively   the left-side, right-side directional fractional integral.
We introduce   the classes of functions representable by the directional fractional integrals.
 \begin{equation}\label{5}
  \mathfrak{I}^{\alpha}_{0+}(L_{p}  ):=\left\{ u:\,u(Q)=(\mathfrak{I}^{\alpha}_{0+}g)(Q  ),\, g\in L_{p}(\Omega),\,1\leq p\leq\infty \right\},
 \end{equation}
\begin{equation}\label{6}
  \mathfrak{I }  ^{\alpha}_{ d   -} (L_{p}  ) =\left\{ u:\,u(Q)=(\mathfrak{I}^{\alpha}_{d-}g)(Q  ),\;g\in L_{p}(\Omega),\;1\leq p\leq\infty  \right\}.
 \end{equation}
Define the operators  $\psi^{+}_{\varepsilon },\; \psi^{-}_{\varepsilon }$ depended on the parameter $\varepsilon>0.$  In the left-side case
 \begin{equation}\label{7}
(\psi^{+}_{  \varepsilon }f)(Q)=  \left\{ \begin{aligned}
 \int\limits_{0}^{r-\varepsilon }\frac{ f (Q)r^{n-1}- f(T)t^{n-1}}{(  r-t)^{\alpha +1}r^{n-1}}  dt,\;\varepsilon\leq r\leq d  ,\\
   \frac{ f(Q)}{\alpha} \left(\frac{1}{\varepsilon^{\alpha}}-\frac{1}{ r ^{\alpha} }    \right),\;\;\;\;\;\;\;\;\;\;\;\;\;\;\; 0\leq r <\varepsilon .\\
\end{aligned}
 \right.
\end{equation}
In the right-side case
\begin{equation*}
 (\psi^{-}_{  \varepsilon }f)(Q)=  \left\{ \begin{aligned}
 \int\limits_{r+\varepsilon }^{d }\frac{ f (Q)- f(T)}{( t-r)^{\alpha +1}} dt,\;0\leq r\leq d -\varepsilon,\\
   \frac{ f(Q)}{\alpha} \left(\frac{1}{\varepsilon^{\alpha}}-\frac{1}{(d -r)^{\alpha} }    \right),\;\;\;d -\varepsilon <r \leq d ,\\
\end{aligned}
 \right.
 \end{equation*}
 where $\mathrm{D}(\psi^{+}_{  \varepsilon }),\mathrm{D}(\psi^{-}_{  \varepsilon })\subset L_{p}(\Omega).$
Using the definitions of the monograph  \cite[p.181]{firstab_lit:samko1987}  we consider the following operators.  In the left-side case
 \begin{equation}\label{8}
 ( \mathfrak{D} ^{\alpha}_{0+\!,\,\varepsilon}f)(Q)=\frac{1}{\Gamma(1-\alpha)}f(Q) r ^{-\alpha}+\frac{\alpha}{\Gamma(1-\alpha)}(\psi^{+}_{  \varepsilon }f)(Q).
 \end{equation}
 In the right-side case
 \begin{equation*}
 ( \mathfrak{D }^{\alpha}_{d-\!,\,\varepsilon}f)(Q)=\frac{1}{\Gamma(1-\alpha)}f(Q)(d-r)^{-\alpha}+\frac{\alpha}{\Gamma(1-\alpha)}(\psi^{-}_{  \varepsilon }f)(Q).
 \end{equation*}
 The left-side  and  right-side fractional derivatives  are  understood  respectively  as the  following  limits with respect to the norm  $L_{p}(\Omega),\,(1\leq p<\infty)$
 \begin{equation}\label{8.1}
 \mathfrak{D }^{\alpha}_{0+}f=\lim\limits_{\stackrel{\varepsilon\rightarrow 0}{ (L_{p}) }} \mathfrak{D }^{\alpha}_{0+\!,\,\varepsilon} f  ,\; \mathfrak{D }^{\alpha}_{d-}f=\lim\limits_{\stackrel{\varepsilon\rightarrow 0}{ (L_{p}) }} \mathfrak{D }^{\alpha}_{d-\!,\,\varepsilon} f .
\end{equation}
 We   need   auxiliary propositions, which  are  presented  in the next section.

 \section{     Some     lemmas and theorems}

We have the following theorem on boundedness of the directional fractional integral operators.
 \begin{theorem}\label{T1} The directional fractional integral operators are bounded in $L_{p}(\Omega),$ $1\leq p<\infty,$ the following estimates holds
  \begin{equation}\label{9}
 \| \mathfrak{I}^{\alpha}_{0 +}u\|_{L_{p}(\Omega)}\leq C\|u \|_{L_{p}(\Omega)},\;\|   \mathfrak{I} ^{\alpha}_{d -}u\|_{L_{p}(\Omega)}\leq C\|u \|_{L_{p}(\Omega)},\;C=  \mathfrak{d} ^{\alpha}/ \Gamma(\alpha+1)  .
 \end{equation}
 \end{theorem}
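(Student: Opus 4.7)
The plan is to reduce both estimates to a one-dimensional weighted bound along each ray from $P$, and then integrate over the unit sphere $\omega$ via the polar representation \eqref{1}. Fix $\mathbf{e}\in\omega$, write $\tilde g(r):=g(P+\mathbf{e}r)$, set $d:=d(\mathbf{e})$, and observe that the theorem follows as soon as I prove
$$
\left(\int_0^{d}r^{n-1}|(\mathfrak{I}^{\alpha}_{0+}g)(Q)|^{p}dr\right)^{1/p}\le \frac{d^{\alpha}}{\Gamma(\alpha+1)}\left(\int_0^{d}r^{n-1}|\tilde g(r)|^{p}dr\right)^{1/p}
$$
and its right-side analogue, since then $d(\mathbf{e})\le\mathfrak{d}$ and integration in $d\chi$ yield the claimed constant.

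For the left-side operator I absorb the radial weight by introducing $h(r):=\tilde g(r)\,r^{(n-1)/p}$. A direct rearrangement of the kernel gives the pointwise identity
$$
r^{(n-1)/p}(\mathfrak{I}^{\alpha}_{0+}g)(Q)=\frac{1}{\Gamma(\alpha)}\int_0^{r}(r-t)^{\alpha-1}h(t)\bigl(t/r\bigr)^{(n-1)(1-1/p)}dt,
$$
and since $(n-1)(1-1/p)\ge0$ and $t\le r$, the extra factor $(t/r)^{(n-1)(1-1/p)}$ is bounded by $1$. What remains is a classical one-dimensional Riemann--Liouville convolution of $|h|$ with the $L_{1}[0,d]$-kernel $k(s)=s^{\alpha-1}$, so Young's inequality gives $\|k\ast|h|\|_{L_{p}[0,d]}\le\|k\|_{L_{1}[0,d]}\|h\|_{L_{p}[0,d]}=(d^{\alpha}/\alpha)\|h\|_{L_{p}[0,d]}$, which is precisely the required one-dimensional estimate.

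For the right-side operator the weight $r^{n-1}$ sits on the outer variable rather than under the kernel, so a convolution estimate is no longer immediate; instead I would invoke Minkowski's integral inequality. After the substitution $s=t-r$ the inner integral becomes $\int_0^{d-r}\tilde g(r+s)\,s^{\alpha-1}ds$, and Minkowski's inequality exchanges the order of integration. A further change $\tau=r+s$ together with the trivial bound $(\tau-s)^{n-1}\le\tau^{n-1}$ separates the weighted $L_p$-norm of $\tilde g$ from the elementary integral $\int_0^{d}s^{\alpha-1}ds=d^{\alpha}/\alpha$, yielding the same constant.

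The only subtle point is the anisotropy of the left-side kernel due to the factor $(t/r)^{n-1}$; the key trick is the splitting $t^{n-1}/r^{n-1}=(t/r)^{(n-1)/p}\cdot(t/r)^{(n-1)(1-1/p)}$, which redistributes exactly the right amount of weight to convert $r^{n-1}\,dr$ into ordinary Lebesgue measure and to reduce the problem to a classical convolution inequality. Once this redistribution is performed both cases are routine, and integration over $\omega$ completes the proof.
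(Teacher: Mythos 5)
Your proof is correct, and it takes a genuinely different (ray-wise) route from the paper's. The paper applies the generalized Minkowski inequality globally over $\Omega$: it substitutes $\tau=r-t$, discards the factor $\bigl((r-\tau)/r\bigr)^{n-1}$ by bounding it by $1$, and then needs the inward radial shift $Q\mapsto Q-\tau\mathbf{e}$ to be a contraction on $L_{p}(\Omega)$, i.e. $\|g(\cdot-\tau\mathbf{e})\|_{L_{p}(\Omega)}\leq\|g\|_{L_{p}(\Omega)}$. That last step is delicate, because in polar coordinates the weight transforms as $r^{n-1}dr\mapsto(s+\tau)^{n-1}ds$ with $(s+\tau)^{n-1}\geq s^{n-1}$, so the shifted norm can exceed the original one for functions concentrated near the vertex $P$; the compensating power of $t/r$ was exactly what got thrown away. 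Your splitting $t^{n-1}/r^{n-1}=(t/r)^{(n-1)/p}\,(t/r)^{(n-1)(1-1/p)}$ keeps precisely the portion of the kernel weight needed to absorb $r^{n-1}dr$ into the $L_{p}$ density on each ray, after which only the harmless factor $(t/r)^{(n-1)(1-1/p)}\leq1$ is discarded; the remaining one-dimensional Riemann--Liouville convolution is handled by Young's inequality with $\|s^{\alpha-1}\|_{L_{1}[0,d]}=d^{\alpha}/\alpha$, and the right-side case goes through by Minkowski with the trivial bound $(\tau-s)^{n-1}\leq\tau^{n-1}$ (which now points in the favorable direction). You arrive at the same constant $\mathfrak{d}^{\alpha}/\Gamma(\alpha+1)$, and your bookkeeping of the anisotropic weight is the more careful of the two arguments; what the paper's version buys is brevity, since a single application of generalized Minkowski over $\Omega$ replaces the reduction to rays.
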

 \begin{proof}
Let us prove   first estimate  \eqref{9}, the proof of the second one   is absolutely analogous. Using the generalized Minkowski inequality, we have
$$
 \| \mathfrak{I}^{\alpha}_{0 +}u\|_{L_{p}(\Omega)}=\frac{1}{\Gamma(\alpha)}\left( \int\limits_{\Omega}\left|  \int\limits^{r}_{0}\frac{g (T)}{( r-t)^{1-\alpha}}\left(\frac{t}{r}\right)^{n-1}\!\!\!dt  \right|^{p}dQ\right)^{1/p} 
$$
$$
 =\frac{1}{\Gamma(\alpha)}\left( \int\limits_{\Omega}\left|  \int\limits^{r}_{0}\frac{g (Q-\tau  \mathbf{e})}{\tau^{1-\alpha}}\left(\frac{r-\tau}{r}\right)^{n-1}\!\!\!d\tau  \right|^{p}dQ\right)^{1/p} 
$$
$$
  \leq\frac{1}{\Gamma(\alpha)}\left( \int\limits_{\Omega}\left(  \int\limits^{\mathfrak{d}}_{0}\frac{|g (Q-\tau  \mathbf{e})|}{\tau^{1-\alpha}}d\tau  \right)^{p}dQ\right)^{1/p} 
  $$
  $$
\leq \frac{1}{\Gamma(\alpha)}  \int\limits^{\mathfrak{d}}_{0}\tau^{\alpha-1} d\tau \left( \int\limits_{\Omega}  |g (Q-\tau  \mathbf{e})|^{p} dQ  \right)^{1/p}\!\!\leq
 \frac{ \mathfrak{d} ^{\alpha}}{\Gamma(\alpha+1)}\,  \| u\|_{L_{p}(\Omega)}.
$$\end{proof}

\begin{theorem}\label{T2}
Suppose $f\in L_{p}(\Omega),$  there  exists   $\lim\limits_{\varepsilon\rightarrow  0}\psi^{+}_{  \varepsilon }f$ or $\lim\limits_{\varepsilon\rightarrow  0}\psi^{-}_{  \varepsilon }f$ with respect to the norm $L_{p}(\Omega),\,(1\leq p<\infty);$ then   $f\in \mathfrak{I} ^{\alpha}_{0 +}(L_{p}) $ or $f\in \mathfrak{I }^{\alpha}_{d -}(L_{p})$ respectively.
\end{theorem}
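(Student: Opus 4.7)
\emph{Proof plan.} I would follow a Samko-type inversion argument adapted to the directional setting. Set
\[
\varphi := \lim_{\varepsilon\to 0}\psi^{+}_{\varepsilon}f\quad\text{in }L_p(\Omega),\qquad g := \frac{1}{\Gamma(1-\alpha)}\bigl(f(Q)\,r^{-\alpha}+\alpha\,\varphi(Q)\bigr),
\]
so that $g$ is the formal pointwise limit of $\mathfrak{D}^{\alpha}_{0+,\varepsilon}f$ from \eqref{8}. The goal is to show $g\in L_p(\Omega)$ and $f=\mathfrak{I}^{\alpha}_{0+}g$, which by \eqref{5} places $f\in\mathfrak{I}^{\alpha}_{0+}(L_p)$.

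First I would verify the integrability $f\,r^{-\alpha}\in L_p(\Omega)$, which combined with the hypothesis yields $g\in L_p(\Omega)$ and $\mathfrak{D}^{\alpha}_{0+,\varepsilon}f\to g$ in $L_p(\Omega)$. The tool is the second branch of \eqref{7}: on $\{r<\varepsilon\}$ one has $\psi^{+}_{\varepsilon}f=f(Q)(\varepsilon^{-\alpha}-r^{-\alpha})/\alpha$, and the uniform $L_p$-boundedness of $\{\psi^{+}_{\varepsilon}f\}$ (a consequence of its convergence) together with Fatou's lemma along a subsequence $\varepsilon_k\to 0$ extracts the integrability. Next I would establish the intermediate identity
\[
\mathfrak{I}^{\alpha}_{0+}\bigl(\mathfrak{D}^{\alpha}_{0+,\varepsilon}f\bigr)\longrightarrow f\quad\text{in }L_p(\Omega),\qquad \varepsilon\to 0.
\]
Given this, the boundedness of $\mathfrak{I}^{\alpha}_{0+}$ from Theorem \ref{T1} applied to $\mathfrak{D}^{\alpha}_{0+,\varepsilon}f-g$ gives $\mathfrak{I}^{\alpha}_{0+}\mathfrak{D}^{\alpha}_{0+,\varepsilon}f\to\mathfrak{I}^{\alpha}_{0+}g$ in $L_p(\Omega)$, and uniqueness of the $L_p$-limit identifies $f$ with $\mathfrak{I}^{\alpha}_{0+}g$. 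The right-side statement follows by the same scheme with $\mathfrak{I}^{\alpha}_{d-}$, $\psi^{-}_{\varepsilon}$, and $(d-r)^{-\alpha}$ replacing their left-side counterparts.

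The principal technical obstacle I expect is the inversion identity $\mathfrak{I}^{\alpha}_{0+}\mathfrak{D}^{\alpha}_{0+,\varepsilon}f\to f$ in $L_p(\Omega)$ itself. My plan is to write the composite operator as an iterated integral along the ray from $P$ through $Q$, change variables to expose a convolution structure in the radial parameter, and invoke Fubini; the inner integral should then reduce to an approximate identity in $\varepsilon$ acting on $f$, plus a remainder term that vanishes in $L_p$. The weight $(t/r)^{n-1}$ inside $\mathfrak{I}^{\alpha}_{0+}$ distinguishes the directional calculation from the one-dimensional Samko version in \cite{firstab_lit:samko1987} and requires a generalized Minkowski estimate analogous to the one used in the proof of Theorem \ref{T1} in order to control the remainder uniformly as $\varepsilon\to 0$.
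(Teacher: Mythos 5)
Your plan is essentially the paper's own proof: the author likewise forms $\varphi^{+}_{\varepsilon}f=\Gamma(1-\alpha)^{-1}\{f\,r^{-\alpha}+\alpha\psi^{+}_{\varepsilon}f\}$ (your $\mathfrak{D}^{\alpha}_{0+,\varepsilon}f$), observes it converges in $L_p$ to a limit $\varphi$, reduces everything to showing $\mathfrak{I}^{\alpha}_{0+}\varphi^{+}_{\varepsilon}f\to f$ in $L_p$, and proves that inversion identity exactly as you outline — Fubini on the iterated ray integral, Samko's formula to expose the approximate-identity kernel $\mathcal{K}$, and generalized Minkowski plus dominated convergence to kill the remainders coming from the weight $(t/r)^{n-1}$ and the boundary regions. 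The only cosmetic difference is how the integrability of $f\,r^{-\alpha}$ is extracted (you use Fatou along a subsequence; the paper gets it for free from the cancellation on $\{r<\varepsilon\}$ and the Cauchy property of $\psi^{+}_{\varepsilon}f$), which changes nothing of substance.
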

\begin{proof}
 Let $f\in L_{p}(\Omega)$ and $\lim\limits_{\stackrel{\varepsilon\rightarrow 0}{ (L_{p}) }}\psi^{+}_{  \varepsilon }f=\psi.$ Consider the function
$$
(\varphi^{+}_{ \varepsilon}f)(Q)=\frac{1}{\Gamma(1-\alpha)}\left\{\frac{f(Q)}{ r ^{\alpha}}+\alpha (\psi^{+}_{ \varepsilon }f)(Q) \right\}.
 $$
 Taking into account     \eqref{7}, we can easily prove  that $\varphi^{+}_{  \varepsilon }f\in L_{p}(\Omega).$ Obviously,    there  exists the limit
 $\varphi^{+}_{  \varepsilon }f\rightarrow \varphi\in L_{p}(\Omega),\,\varepsilon\downarrow 0.$ Taking into account Theorem \ref{T1}, we can  complete the proof,  if we  show that
 \begin{equation}\label{10.01}
 \mathfrak{I}^{\alpha}_{0+}\varphi^{+}_{ \varepsilon }f  \stackrel{L_{p}}{\rightarrow} f,\,\varepsilon\downarrow0.
 \end{equation}
 In the case  $(\varepsilon\leq r\leq d),$ we have
$$
(\mathfrak{I}^{\alpha}_{0 +}\varphi^{+}_{ \varepsilon }f)(Q)\frac{\pi r^{n-1}}{\sin\alpha\pi}=
 \int\limits_{\varepsilon}^{r}\frac{f (P+y\mathbf{e})y ^{n-1-\alpha}}{( r-y)^{1-\alpha} } dy 
$$
$$
  +\alpha\int\limits_{\varepsilon}^{r}( r-y)^{\alpha-1}  dy\int\limits_{0 }^{y-\varepsilon }\frac{ f (P+y\mathbf{e})y^{n-1}- f(T)t^{n-1}}{( y-t )^{\alpha +1}} dt 
$$
$$
 +\frac{1}{\varepsilon^{\alpha}}\int\limits_{0}^{\varepsilon  }f (P+y\mathbf{e})( r-y)^{\alpha-1} y^{n-1}  dy =I.
$$
By direct calculation, we obtain
 \begin{equation}\label{10}
I=   \frac{1}{\varepsilon^{\alpha}}\int\limits_{0}^{r  }f (P+y\mathbf{e})( r-y)^{\alpha-1}y^{n-1}   dy  -
 \alpha\int\limits_{\varepsilon}^{r}( r-y)^{\alpha-1} dy\int\limits_{0 }^{y-\varepsilon }\frac{  f(T)}{(  y-t)^{\alpha +1}}t^{n-1} dt .
\end{equation}
 Changing the variable of integration in  the second  integral,   we have
$$
 \alpha\int\limits_{\varepsilon}^{r}( r-y)^{\alpha-1} dy\int\limits_{0 }^{y-\varepsilon }\frac{  f(T)}{(  y-t)^{\alpha +1}}t^{n-1} dt 
$$
$$
 =\alpha\int\limits_{0}^{r-\varepsilon}( r-y-\varepsilon)^{\alpha-1} dy\int\limits_{0 }^{y  }\frac{  f(T)}{(  y+\varepsilon-t)^{\alpha +1}}t^{n-1} dt  
$$
\begin{equation}\label{11}
=\alpha\int\limits_{0}^{r-\varepsilon}f(T)t^{n-1} dt\int\limits_{t }^{r-\varepsilon  }\frac{( r-y-\varepsilon)^{\alpha-1}   }{(  y+\varepsilon-t)^{\alpha +1}}dy  
$$
$$
=\alpha\int\limits_{0}^{r-\varepsilon}f(T)t^{n-1} dt\int\limits_{t +\varepsilon}^{r  } ( r-y )^{\alpha-1}   (  y -t)^{-\alpha -1} dy .
\end{equation}
Applying formula   (13.18) \cite[p.184]{firstab_lit:samko1987}, we get
\begin{equation}\label{12}
 \int\limits_{t +\varepsilon}^{r  } ( r-y )^{\alpha-1}   (  y -t)^{-\alpha -1} dy=
  \frac{1}{\alpha \varepsilon^{\alpha}}\cdot\frac{(r-t-\varepsilon)^{\alpha}}{ r-t }.
\end{equation}
 Combining    relations \eqref{10},\eqref{11},\eqref{12}, using the change of the variable     $t=r-\varepsilon\tau, $ we get
$$
(\mathfrak{I}^{\alpha}_{0 +}\varphi^{+}_{ \varepsilon }f)(Q)\frac{\pi r^{n-1}}{\sin\alpha\pi}  
$$
$$
=\frac{1}{\varepsilon^{\alpha}}\left\{ \int\limits_{0}^{r  }f (P+y\mathbf{e})(r-y)^{\alpha-1}y^{n-1}   dy- \int\limits_{0}^{r-\varepsilon  } \frac{f(T)(r-t-\varepsilon)^{\alpha}}{ r-t } t^{n-1}dt \right\} 
$$
\begin{equation}\label{13}
=\frac{1}{ \varepsilon^{\alpha}} \int\limits_{0 }^{r  } \frac{f(T)\left[(r -t)^{\alpha}-(r-t-\varepsilon)_{+}^{\alpha}\right]}{ r-t }t^{n-1}dt 
$$
$$
=\int\limits_{0 }^{r/\varepsilon   }\frac{\tau^{\alpha}-(\tau-1)_{+}^{\alpha}}{ \tau } f(P+[r-\varepsilon \tau ]\mathbf{e})(r-\varepsilon \tau)^{n-1} d\tau,\;\tau_{+}=\left\{\begin{array}{cc}\tau,\;\tau\geq0;\\[0,25cm] 0,\;\tau<0\,.\end{array}\right. .
 \end{equation}
Consider the auxiliary function $\mathcal{K}$ defined in the paper   \cite[p.105]{firstab_lit:samko1987}
 \begin{equation}\label{14}
 \mathcal{K}(t)= \frac{\sin\alpha\pi}{\pi }\cdot\frac{ t_{+}^{\alpha}-(t-1)_{+}^{\alpha}}{ t }\in L_{p}(\mathbb{R}^{1});\; \int\limits_{0 }^{\infty  }\mathcal{K}(t)dt=1;\;\mathcal{K}(t)>0.
\end{equation}
  Combining  \eqref{13},\eqref{14} and taking into account that    $f$ has the zero extension outside of  $\bar{\Omega},$ we obtain
 \begin{equation}\label{15}
 (\mathfrak{I}^{\alpha}_{0+}\varphi^{+}_{ \varepsilon }f)(Q)-f(Q) =  \int\limits_{0 }^{\infty  }\mathcal{K}(t) \left\{f(P+[r-\varepsilon t]\mathbf{e})(1-\varepsilon t/r)_{+}^{n-1}-f(P+ r \mathbf{e})  \right\}dt.
\end{equation}
Consider the case  $(0\leq  r <\varepsilon).$ Taking into account \eqref{7}, we get
 \begin{equation}\label{16}
(\mathfrak{I}^{\alpha}_{0+}\varphi^{+}_{ \varepsilon }f)(Q)-f (Q) =
\frac{\sin\alpha\pi}{\pi\varepsilon^{\alpha}} \int\limits_{0}^{r }\frac{f (T)}{(r-t)^{1-\alpha} } \left(\frac{t}{r} \right)^{n-1} dt-f (Q) 
$$
$$
=\frac{\sin\alpha\pi}{\pi\varepsilon^{\alpha}} \int\limits_{0}^{r }\frac{f (P+[r-t]\mathbf{e})}{t^{1-\alpha} }\left(\frac{r-t}{r} \right)^{n-1} dt-f (Q).
\end{equation}
Consider the domains
 \begin{equation}\label{17}
 \Omega_{\varepsilon} :=\{Q\in\Omega,\,d(\mathbf{e})\geq\varepsilon \},\;\tilde{\Omega}_{  \varepsilon }=\Omega\setminus \Omega_{\varepsilon}.
 \end{equation}
In accordance with    this definition  we can  divide the surface $\omega$ into two  parts $ \omega_{\varepsilon}$ and  $\tilde{\omega}_{ \varepsilon },$ where $ \omega_{\varepsilon}$ is  the subset of  $\omega$ such that   $d(\mathbf{e})\geq\varepsilon$ and $\tilde{\omega}_{ \varepsilon }$ is  the subset of  $\omega$ such that  $d(\mathbf{e}) <\varepsilon.$
Using  \eqref{15},\eqref{16}, we get
\begin{equation}\label{18}
  \|(\mathfrak{I}^{\alpha}_{0+}\varphi^{+}_{ \varepsilon }f) -f\|^{p}_{L_{p}(\Omega)}  
$$
$$
=   \int\limits_{\omega_{\varepsilon}}d\chi\int\limits_{\varepsilon}^{ d}
  \left|\int\limits_{0 }^{\infty  }\mathcal{K}(t)[f(Q- \varepsilon t \mathbf{e})(1-\varepsilon t/r)_{+}^{n-1}-f(Q)]dt\right|^{p}r^{n-1}dr    
$$
$$
+ \int\limits_{\omega_{\varepsilon}}d\chi\int\limits_{0}^{\varepsilon }
\left|  \frac{\sin\alpha\pi}{\pi\varepsilon^{\alpha}}\int\limits_{0}^{r }
\frac{f (P+[r-t]\mathbf{e})}{t^{1-\alpha} }\left(\frac{r-t}{r} \right)^{n-1} dt-f (Q)\right|^{p}r^{n-1}dr 
$$
$$
+  \int\limits_{\tilde{\omega}_{\varepsilon} }d\chi\int\limits_{0}^{d }
\left| \frac{\sin\alpha\pi}{\pi\varepsilon^{\alpha}}\int\limits_{0}^{r }\frac{f (P+[r-t]\mathbf{e})}{t^{1-\alpha} }\left(\frac{r-t}{r} \right)^{n-1} dt-f (Q) \right|^{p}r^{n-1}dr =I_{1}+I_{2}+I_{3}.
 \end{equation}
Consider $ I_{1},$ using the generalized Minkovski  inequality,  we get
$$
  I^{\frac{1 }{p}}_{1} \leq  \int\limits_{0 }^{\infty  }\mathcal{K}(t)
  \left(\int\limits_{\omega_{\varepsilon} }d\chi\int\limits_{\varepsilon}^{ d}
  |f(Q- \varepsilon t \mathbf{e})(1-\varepsilon t/r)_{+}^{n-1}-f(Q)|^{p}r^{n-1}dr \right)^{\frac{1 }{p}} dt.
$$
We use the following  notation
$$
 h(\varepsilon,t):= \mathcal{K}(t)\left(\int\limits_{\omega_{\varepsilon} }d\chi\int\limits_{\varepsilon}^{ d}
  |f(Q- \varepsilon t \mathbf{e})(1-\varepsilon t/r)_{+}^{n-1}-f(Q)|^{p}r^{n-1}dr \right)^{\frac{1 }{p}} dt.
$$
It can easily be checked that
\begin{equation}\label{19}
  |h(\varepsilon,t)|\leq 2\mathcal{K}(t) \| f\|_{L_{p}(\Omega)},\;\forall\varepsilon>0;
\end{equation}
  \begin{equation*}
  |h(\varepsilon,t)|\leq  \left(\int\limits_{\omega_{\varepsilon} }d\chi\int\limits_{\varepsilon}^{ d}
 \left |(1-\varepsilon t/r)_{+}^{n-1}[f(Q- \varepsilon t \mathbf{e})-f(Q)]\right|^{p}r^{n-1}dr \right)^{\frac{1 }{p}} dt 
 $$
 $$+\left(\int\limits_{\omega_{\varepsilon} }d\chi\int\limits_{0}^{ d}
  \left|f(Q)  [1-(1-\varepsilon t/r)_{+}^{n-1}]\right|^{p}r^{n-1}dr \right)^{\frac{1 }{p}} dt=I_{11}+I_{12}.
\end{equation*}
By virtue of the average continuity property  in  $L_{p}(\Omega),$  we have $\forall t>0:\, I_{11}\rightarrow 0,\;\varepsilon\downarrow 0.$
Consider $I_{12}$ and  let us define the function
$$
h_{1}(\varepsilon,t,r):=\left|f(Q)  [1-(1-\varepsilon t/r)_{+}^{n-1}]\right|.
$$ Obviously, the following relations hold  almost everywhere  in $\Omega$
$$
\forall t>0,\,h_{1}(\varepsilon,t,r) \leq |f(Q)|,\;h_{1}(\varepsilon,t,r)\rightarrow 0,\;\varepsilon \downarrow0.
$$
Applying the Lebesgue  dominated convergence theorem, we get $I_{12}\rightarrow 0,\;\varepsilon\downarrow 0.$
It implies that
\begin{equation}\label{20}
\forall t>0,\,\lim\limits_{\varepsilon\rightarrow 0} h(\varepsilon,t)=0.
\end{equation}
Taking into account  \eqref{19}, \eqref{20} and    applying  the Lebesgue  dominated convergence theorem again, we obtain
$$
 I_{1}\rightarrow 0,\;\;\varepsilon\downarrow0  .
$$
Consider $I_{2},$ using the Mincovski  inequality, we  get
$$
I^{\frac{1 }{p}}_{2} \leq \frac{\sin\alpha\pi}{\pi\varepsilon^{\alpha}}\left( \int\limits_{\omega_{\varepsilon} }d\chi\int\limits_{0}^{\varepsilon }
\left| \int\limits_{0}^{r }\frac{f (Q-t\mathbf{e})}{t^{1-\alpha} }\left(\frac{r-t}{r} \right)^{n-1} dt\right|^{p}r^{n-1}dr\right)^{\frac{1 }{p}}
$$
$$
+\left(\int\limits_{\omega_{\varepsilon} }d\chi\int\limits_{0}^{\varepsilon}
\left| f (Q) \right|^{p}r^{n-1}dr \right)^{\frac{1 }{p}}=I_{21} +I_{22}.
$$
Applying the generalized  Mincovski  inequality, we obtain
$$
I_{21}\frac{\pi}{\sin\alpha\pi}=\frac{1}{\varepsilon^{\alpha}}\left(\int\limits_{\omega_{\varepsilon} }d\chi
\int\limits_{0}^{\varepsilon }
\left|  \int\limits_{0}^{r }\frac{f (Q-t\mathbf{e})}{t^{1-\alpha} }\left(\frac{r-t}{r} \right)^{n-1} \!\!\! dt\right|^{p}r^{n-1}\! dr \right)^{\frac{1 }{p}} 
$$
$$
\leq\frac{1}{\varepsilon^{\alpha}}\left\{\int\limits_{\omega_{\varepsilon} }\!\!\left[\int\limits_{0}^{\varepsilon }\!\!t ^{\alpha-1 }\!\!
\left( \int\limits_{t}^{\varepsilon }\!\!|f (Q -t \mathbf{e})|^{p}\!\left(\frac{r-t}{r} \right)^{\!\!\!(p-1)(n-1)}\!\!\!(r-t)^{n-1} \!dr  \right)^{\frac{1 }{p}}\!\!dt\right]^{p}\!\!d\chi \right\}^{\frac{1 }{p}} 
 $$
 $$
\leq\frac{1}{\varepsilon^{\alpha}}\left\{\int\limits_{\omega_{\varepsilon} }\left[\int\limits_{0}^{\varepsilon } t ^{\alpha-1 }
\left( \int\limits_{t}^{\varepsilon } \left|f (P+[r-t]\mathbf{e})\right|^{p}  (r-t)^{n-1}dr  \right)^{\frac{1 }{p}}\!\!dt\right]^{p}\!\!d\chi \right\}^{\frac{1 }{p}} 
 $$
  $$
\leq\frac{1}{\varepsilon^{\alpha}}\left\{\int\limits_{\omega_{\varepsilon} }\left[\int\limits_{0}^{\varepsilon } t ^{\alpha-1 }
\left( \int\limits_{0}^{\varepsilon } |f (P +r \mathbf{e})|^{p}  r^{n-1}dr  \right)^{\frac{1 }{p}}\!\!dt\right]^{p}\!\!d\chi \right\}^{\frac{1 }{p}}\!\!=\frac{1}{\alpha}\| f\| _{L_{p}( \Delta_{\varepsilon})},\;
$$
$$
\Delta_{\varepsilon} :=\{Q\in\Omega_{\varepsilon},\,r<\varepsilon \} .
 $$
Note that   ${\rm mess}\, \Delta_{\varepsilon}\rightarrow 0,\,\varepsilon\downarrow0,$
hence  $I_{21},I_{22}\rightarrow 0,\,\varepsilon\downarrow0 .$ It follows  that $I_{2 }\rightarrow 0,\,\varepsilon\downarrow0.$ In the same way, we obtain   $I_{3 }\rightarrow 0,\,\varepsilon\downarrow 0.$ Since we proved that $I_{1},I_{2},I_{3}\rightarrow 0,\,\varepsilon\downarrow0,$ then relation \eqref{10.01} holds.
 This completes the proof corresponding to the left-side case.
 The proof corresponding to the right-side case is absolutely analogous.
\end{proof}
\begin{theorem}\label{T3}
Suppose  $f=\mathfrak{I}^{\alpha}_{0+} \psi$ or $f= \mathfrak{I} ^{\alpha}_{d -} \psi ,\;\psi\in L_{p}(\Omega),\;1\leq p<\infty;$ then
$
\, \mathfrak{D }^{\alpha}_{0+}f =\psi
$
or
$
 \mathfrak{D} ^{\alpha}_{d-}f =\psi
$
respectively.
 \end{theorem}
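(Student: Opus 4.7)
The plan is to reduce the statement to the classical one-dimensional Marchaud calculus along each ray issuing from the fixed boundary point $P$, and then to assemble the rays by means of the polar-type decomposition \eqref{1}. I concentrate on the left-side identity $\mathfrak{D}^{\alpha}_{0+}f=\psi$ with $f=\mathfrak{I}^{\alpha}_{0+}\psi$; the right-side case is verbatim analogous.

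First I would fix $P\in\partial\Omega$ and $\mathbf{e}\in\omega$, and introduce the weighted one-variable auxiliaries $F(r):=f(P+r\mathbf{e})r^{n-1}$ and $G(r):=\psi(P+r\mathbf{e})r^{n-1}$ on $[0,d(\mathbf{e})]$. Substituting $f=\mathfrak{I}^{\alpha}_{0+}\psi$ into the definition of the directional integral, the $(t/r)^{n-1}$ factor combines with the weights, giving $F(r)=\Gamma(\alpha)^{-1}\!\int_0^r(r-t)^{\alpha-1}G(t)\,dt$, i.e.\ $F=I^{\alpha}_{0+}G$ in the ordinary Riemann--Liouville sense. A similar substitution in \eqref{7}--\eqref{8} yields, for $r\geq\varepsilon$,
$$
r^{n-1}(\mathfrak{D}^{\alpha}_{0+,\,\varepsilon}f)(P+r\mathbf{e})=\frac{1}{\Gamma(1-\alpha)}\left\{\frac{F(r)}{r^{\alpha}}+\alpha\int_0^{r-\varepsilon}\!\frac{F(r)-F(t)}{(r-t)^{\alpha+1}}\,dt\right\},
$$
which is exactly the classical truncated Marchaud fractional derivative of $F$ on $[0,d(\mathbf{e})]$.

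Next I would establish a mollifier identity on each ray by running the Fubini/change-of-variable computation of Theorem \ref{T2} — relations \eqref{10}--\eqref{13} together with the evaluation \eqref{12} of formula (13.18) from \cite{firstab_lit:samko1987} — in the opposite direction. That manipulation expresses the right-hand side above as a convolution of $G$ against $\varepsilon^{-1}\mathcal{K}(\cdot/\varepsilon)$ with the same Feller--Samko kernel $\mathcal{K}$ of \eqref{14}. Dividing back by $r^{n-1}$ and accounting for the branch $0\leq r<\varepsilon$ of \eqref{7} as a separate correction, one obtains an identity of the form
$$
(\mathfrak{D}^{\alpha}_{0+,\,\varepsilon}f)(Q)-\psi(Q)=\int_0^\infty\mathcal{K}(\tau)\bigl[\psi(P+[r-\varepsilon\tau]\mathbf{e})(1-\varepsilon\tau/r)^{n-1}_{+}-\psi(Q)\bigr]\,d\tau
$$
on the principal region $r\geq\varepsilon$, together with an explicit remainder supported on $\Delta_{\varepsilon}:=\{Q\in\Omega:\,r<\varepsilon\}$ and controlled in norm by $\|\psi\|_{L_p(\Delta_\varepsilon)}$.

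The last step is to pass to the $L_p(\Omega)$-limit exactly as in the treatment of $I_1,I_2,I_3$ in the proof of Theorem \ref{T2}. Decomposing the norm via \eqref{1}, the principal term is bounded by the generalized Minkowski inequality and then annihilated by two applications of the Lebesgue dominated convergence theorem, with majorant $2\mathcal{K}(\tau)\|\psi\|_{L_p(\Omega)}$ and pointwise vanishing $h(\varepsilon,\tau)\to 0$ supplied by the $L_p$ average continuity of $\psi$; the correction on $\Delta_\varepsilon$ vanishes because ${\rm mes}\,\Delta_\varepsilon\to 0$. Together with \eqref{8.1} this gives $\mathfrak{D}^{\alpha}_{0+,\,\varepsilon}f\to\psi$ in $L_p(\Omega)$, proving $\mathfrak{D}^{\alpha}_{0+}f=\psi$. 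The main technical hurdle is the algebraic reduction to the Marchaud--$\mathcal{K}$ convolution identity on each ray; once that is carried out — and it is essentially the computation of Theorem \ref{T2} read in reverse — the $L_p$ convergence is almost mechanical.
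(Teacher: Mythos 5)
Your proposal is correct and follows essentially the same route as the paper: the paper's own computation (expressing $r^{n-1}f(Q)-(r-\tau)^{n-1}f(Q-\tau\mathbf{e})$ through the kernel $k$, applying Fubini and formula (6.12) of \cite{firstab_lit:samko1987}) is precisely your one-dimensional Marchaud reduction for $F=I^{\alpha}_{0+}G$ with the weight $r^{n-1}$ carried along, and it ends in the same $\mathcal{K}$-convolution identity followed by the same $L_p$ limit passage borrowed from Theorem \ref{T2}. The only minor discrepancy is that the identity comes from formula (6.12) (the relation between $\int k$ and $\mathcal{K}$) rather than from reversing the (13.18)-based computation of Theorem \ref{T2}, but this does not change the argument.
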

\begin{proof}
Consider
$$
r^{n-1}f(Q)-(r-\tau)^{n-1}f(Q-\tau\mathbf{e}) 
$$
$$
=\frac{1}{\Gamma(\alpha)} \int\limits_{0}^{ r}\frac{\psi (Q-t\mathbf{e})}{t^{1-\alpha}}\left( r-t \right)^{n-1}dt-\frac{1}{\Gamma(\alpha)} \int\limits_{\tau}^{ r}\frac{\psi (Q-t\mathbf{e})}{(t-\tau)^{1-\alpha}}\left( r-t \right)^{n-1}dt 
$$
$$
 =\tau^{\alpha-1}  \int\limits_{0}^{ r} \psi (Q-t\mathbf{e}) k\left(\frac{t}{\tau}\right)(r-t)^{n-1}dt,\;k(t)= \frac{1}{\Gamma(\alpha)}\left\{\begin{array}{cc}t^{\alpha-1},\;0<t<1;\\[0,25cm] t^{\alpha-1}-(t-1)^{\alpha-1},\;t>1.\end{array}\right.
$$
Hence in the case    $(\varepsilon\leq r\leq d),$ we have
$$
(\psi^{+}_{  \varepsilon }f)(Q)=\int\limits_{ \varepsilon }^{ r }\frac{r^{n-1} f(Q)-(r-\tau)^{n-1}f(Q-\tau\mathbf{e})}{ r^{n-1}\tau ^{\alpha +1}} d\tau 
$$
$$
=\int\limits_{ \varepsilon }^{ r } \tau^{-2} d\tau\int\limits_{0}^{ r} \psi (Q-t\mathbf{e}) k\left(\frac{t}{\tau}\right)\left( 1-t/r \right)^{n-1}dt  
$$
$$
=\int\limits_{ 0 }^{ r }\psi (Q-t\mathbf{e}) \left( 1-t/r \right)^{n-1}  dt\int\limits_{\varepsilon}^{ r}  k\left(\frac{t}{\tau}\right) \tau^{-2}d \tau   
$$
$$
=\int\limits_{ 0 }^{ r }\psi (Q-t\mathbf{e}) \left( 1-t/r \right)^{n-1}t^{-1} dt\int\limits_{ t/ r  }^{t/\varepsilon}  k (s )ds.
$$
Applying   formula    (6.12) \cite[p.106]{firstab_lit:samko1987}, we get
$$
(\psi^{+}_{  \varepsilon }f)(Q)\cdot\frac{\alpha}{\Gamma(1-\alpha)}=\int\limits_{ 0 }^{ r }\psi (Q-t\mathbf{e})
 \left( 1-t/r \right)^{n-1}\left[ \frac{1}{\varepsilon}\mathcal{K}\left(\frac{t}{\varepsilon}\right) - \frac{1}{ r}\mathcal{K}\left(\frac{t}{ r}\right) \right]dt.
$$
Since in accordance with \eqref{14},  we have
 $$
\mathcal{K}\left(\frac{t}{ r}\right)=\left[\Gamma(1-\alpha)\Gamma(\alpha) \right]^{-1} \left(\frac{t}{ r}\right)^{\alpha-1}\!\!\!,
$$
then
$$
(\psi^{+}_{  \varepsilon }f)(Q)\cdot\frac{\alpha}{\Gamma(1-\alpha)}=\int\limits_{ 0 }^{ r /\varepsilon }\mathcal{K} ( t   ) \psi (Q-\varepsilon t\mathbf{e})\left( 1-\varepsilon t/r \right)^{n-1}
  dt-\frac{f(Q)}{\Gamma(1-\alpha) r ^{ \alpha}}.
$$
 Taking into account \eqref{8},\eqref{14}, and that the function     $\psi(Q)$ has the zero  extension   outside of   $\bar{\Omega},$   we obtain
$$
 ( \mathfrak{D} ^{\alpha}_{0+,\varepsilon}f)(Q)-\psi(Q) =\int\limits_{ 0 }^{\infty}
\mathcal{K} ( t   ) \left[\psi (Q - \varepsilon t \mathbf{e})(1-\varepsilon t/r)_{+}^{n-1}-\psi (Q) \right]dt,\;\varepsilon\leq r\leq d  .
$$
Consider the case    $ ( 0\leq r<\varepsilon).$ In accordance with \eqref{7},  we have
$$
( \mathfrak{D} ^{\alpha}_{0+,\varepsilon}f)(Q)-\psi(Q)=\frac{f(Q)}{\varepsilon^{\alpha}\Gamma(1-\alpha)}-\psi(Q).
$$
Using the generalized Mincovski  inequality, we   get
$$
\|( \mathfrak{D} ^{\alpha}_{0+,\varepsilon}f)(Q)-\psi(Q)\|_{L_{p}(\Omega)}\leq\!\! \int\limits_{ 0 }^{\infty}
\!\! \mathcal{K}(t)   \| \psi(Q - \varepsilon t \mathbf{e})(1-\varepsilon t/r)_{+}^{n-1}-\psi (Q)\|_{L_{p}(\Omega)}dt 
$$
$$
+\frac{1}{ \Gamma(1-\alpha)\varepsilon^{\alpha}}\|f\|_{L_{p}(\Delta'_{\varepsilon})}+\|\psi\|_{L_{p}(\Delta'_{\varepsilon})},\;\Delta'_{\varepsilon}=\Delta_{\varepsilon}\cup \tilde{\Omega}_{\varepsilon},
$$
here we use the denotations that were used in Theorem \ref{T2}. Arguing as above (see Theorem \ref{T2}), we see that
    all three  summands of the right side  of the previous  inequality tend to zero, when  $\varepsilon\downarrow 0.$
\end{proof}

\begin{theorem} \label{T4}  Suppose $\rho\in {\rm Lip}\,\lambda,\;\alpha<\lambda\leq 1,\;f\in H_{0}^{1}(\Omega) ;$ then $\rho f\in  \mathfrak{I} ^{\alpha}_{\,0 +}(L_{2} ) \cap \mathfrak{I} ^{\alpha}_{d -}(L_{2} ).$ \end{theorem}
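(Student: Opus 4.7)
The plan is to invoke Theorem~\ref{T2}: to obtain $\rho f\in \mathfrak{I}^{\alpha}_{0+}(L_{2})$ it suffices to show that $\psi^{+}_{\varepsilon}(\rho f)$ converges in $L_{2}(\Omega)$ as $\varepsilon\downarrow 0$; the right-side inclusion then follows by the mirror argument. Note first that $\rho\in \mathrm{Lip}\,\lambda\subset C(\bar{\Omega})$ is bounded, so $\rho f\in L_{2}(\Omega)$ and the hypothesis of Theorem~\ref{T2} is applicable.

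On the bulk $\varepsilon\le r\le d$, I would start from the algebraic splitting
\begin{equation*}
\rho(Q)f(Q)r^{n-1}-\rho(T)f(T)t^{n-1}=\rho(Q)\bigl[f(Q)r^{n-1}-f(T)t^{n-1}\bigr]+\bigl[\rho(Q)-\rho(T)\bigr]f(T)t^{n-1},
\end{equation*}
which decomposes $(\psi^{+}_{\varepsilon}(\rho f))(Q)=\rho(Q)\,(\psi^{+}_{\varepsilon}f)(Q)+J_{\varepsilon}(Q)$, where $J_{\varepsilon}$ is the integral generated by the second summand. The small-$r$ region $0\le r<\varepsilon$ is handled separately, by an argument patterned on the treatment of $I_{2},I_{3}$ in the proof of Theorem~\ref{T2}; its contribution is supported on a set whose measure shrinks to zero.

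For the first piece, boundedness of $\rho$ reduces the $L_{2}$-convergence of $\rho\,\psi^{+}_{\varepsilon}f$ to that of $\psi^{+}_{\varepsilon}f$ alone. Here I would invoke that $f\in H_{0}^{1}(\Omega)$ already implies $f\in\mathfrak{I}^{\alpha}_{0+}(L_{2})$ for every $\alpha\in(0,1)$: the Kipriyanov estimate \eqref{3} applied with $l=1$, $p=q=2$, $n\ge 2$ (together with a density approximation by smooth functions with compact support) shows that $\mathfrak{D}^{\alpha}_{0+,\varepsilon}f$ is Cauchy in $L_{2}$, and in view of \eqref{8} so is $\psi^{+}_{\varepsilon}f$. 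The second piece is the heart of the matter: since $|Q-T|=r-t$, the Hölder–Lipschitz condition gives $|\rho(Q)-\rho(T)|\le M(r-t)^{\lambda}$, and hence
\begin{equation*}
|J_{\varepsilon}(Q)|\le M\int_{0}^{r}\frac{|f(T)|\,t^{n-1}}{(r-t)^{\alpha+1-\lambda}\,r^{n-1}}\,dt \;=\; M\,\Gamma(\lambda-\alpha)\,\bigl(\mathfrak{I}^{\lambda-\alpha}_{0+}|f|\bigr)(Q).
\end{equation*}
The crucial hypothesis $\alpha<\lambda$ turns the singular kernel into an integrable one of order $\lambda-\alpha>0$, and the resulting majorant lies in $L_{2}(\Omega)$ by Theorem~\ref{T1}. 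Pointwise almost-everywhere convergence of $J_{\varepsilon}$ as the lower limit travels from $r-\varepsilon$ to $r$, combined with this uniform $L_{2}$-dominant, yields $L_{2}$-convergence of $J_{\varepsilon}$ via the Lebesgue dominated convergence theorem.

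The main obstacle I anticipate is a bookkeeping one: controlling the switched branch of $\psi^{+}_{\varepsilon}$ on $0\le r<\varepsilon$ and verifying carefully the background fact $H_{0}^{1}(\Omega)\subset \mathfrak{I}^{\alpha}_{0+}(L_{2})$. Everything else reduces to two transparent mechanisms — a bounded multiplier preserving convergence, and dominated convergence against a fractional integral of order $\lambda-\alpha$ — and the right-side inclusion $\rho f\in \mathfrak{I}^{\alpha}_{d-}(L_{2})$ is obtained by applying the symmetric splitting to $\psi^{-}_{\varepsilon}(\rho f)$ and repeating the argument verbatim.
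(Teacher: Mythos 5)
Your decomposition $\psi^{+}_{\varepsilon}(\rho f)=\rho\,\psi^{+}_{\varepsilon}f+J_{\varepsilon}$ and your treatment of $J_{\varepsilon}$ are sound and genuinely different from the paper's argument: the uniform majorant $M\Gamma(\lambda-\alpha)\,\mathfrak{I}^{\lambda-\alpha}_{0+}|f|\in L_{2}(\Omega)$ plus dominated convergence is a clean way to dispose of the H\"older increment. The paper instead works with Cauchy differences $\psi^{+}_{\varepsilon_{1}}(\rho f)-\psi^{+}_{\varepsilon_{2}}(\rho f)$ for $f\in C_{0}^{\infty}(\Omega)$ and then controls $\varphi_{n}=\mathfrak{D}^{\alpha}_{0+}(\rho f_{n})$ through $\|f_{n+m}-f_{n}\|_{H_{0}^{1}(\Omega)}$, so it never needs the case $\rho\equiv 1$ as a separate input.

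The genuine gap is the ``background fact'' $H_{0}^{1}(\Omega)\subset\mathfrak{I}^{\alpha}_{0+}(L_{2})$, on which your first piece rests. First, this is precisely the instance $\rho\equiv 1$ of the theorem being proved (constants belong to ${\rm Lip}\,1$ and $\alpha<1$), so invoking it without an independent proof is circular. Second, your justification does not work as stated: condition \eqref{2} requires $q>p$, so \eqref{3} cannot be used with $p=q=2$ (you must take $q>2$ and embed $L_{q}(\Omega)\hookrightarrow L_{2}(\Omega)$, harmless but necessary to say); more seriously, \eqref{3} is an a priori bound on the limiting operator $\mathfrak{D}^{\alpha}$ and by itself says nothing about whether the truncations $\mathfrak{D}^{\alpha}_{0+,\varepsilon}f$, equivalently $\psi^{+}_{\varepsilon}f$, form a Cauchy family in $L_{2}$. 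To obtain that you must first prove convergence of the truncations on a dense class such as $C_{0}^{\infty}(\Omega)$ --- and this is exactly the computation the paper performs in its estimate of $I_{1}$ (the increment of the Lipschitz function of order $t^{\lambda-\alpha-1}$, the correction $1-(1-t/r)^{n-1}$, and the positive distance $\delta$ from the support to $\partial\Omega$) --- and only then do \eqref{3}, density, Theorem \ref{T1} and Theorem \ref{T2} transfer the conclusion to $H_{0}^{1}(\Omega)$. So the step you defer to the background fact contains essentially the whole analytic content of the theorem; filling it in reproduces the paper's proof for $\rho\equiv 1$, with your $J_{\varepsilon}$ argument added on top. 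A minor further point: your control of the branch $0\le r<\varepsilon$ requires $f(Q)r^{-\alpha}\in L_{2}(\Omega)$, which holds for $f\in H_{0}^{1}(\Omega)$ by the Hardy-type computation the paper carries out for $I_{2}$, but not for general $f\in L_{2}(\Omega)$; this is a second place where the hypothesis $f\in H_{0}^{1}(\Omega)$ enters and should be made explicit.
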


\begin{proof}
We   provide a proof only for the left-side   case,   the proof corresponding to the right-side case is absolutely analogous.     First,  assume that  $f\in C_{0}^{\infty}(\Omega) .$
Using  the denotations that were used in Theorem \ref{T2},
  we have
\begin{equation}\label{21}
\|\psi^{+}_{ \varepsilon_1}f  - \psi^{+}_{ \varepsilon_2}f \|_{L_{2}(\Omega )}\leq\!\!\|\psi^{+}_{ \varepsilon_1}f  - \psi^{+}_{ \varepsilon_2}f \|_{L_{2}( \Omega_{\varepsilon_{1}})}+\|\psi^{+}_{ \varepsilon_1}f  - \psi^{+}_{ \varepsilon_2}f \|_{L_{2}(\tilde{\Omega}_{ \varepsilon_{1}})},
\end{equation}
where $\varepsilon_{1}>\varepsilon_{2}>0.$
   We have the following reasoning
$$\|\psi^{+}_{ \varepsilon_1}f  - \psi^{+}_{ \varepsilon_2}f \|_{L_{2}(\Omega_{\varepsilon_{1}})}\leq
 \left(\int\limits_{\omega_{\varepsilon_{1}}}d\chi\int\limits_{\varepsilon_{1}}^{d }\left|\int\limits_{r-\varepsilon_{1}}^{r-\varepsilon_{2}}\frac{(\rho f)(Q)r^{n-1}-
 (\rho f)(T)t^{n-1}}{r^{n-1}(r-t)^{\alpha +1}} dt \right|^{2}r^{n-1} dr    \right)^{\frac{1}{2}} 
$$
$$
  +\!\!\left(\int\limits_{\omega_{\varepsilon_{1}}}d\chi\int\limits_{ \varepsilon_{2}}^{ \varepsilon_{1}}\!\!\left|\int\limits_{0}^{r- \varepsilon_{1}}\frac{(\rho f)(Q)r^{n-1} }{r^{n-1}(r-t)^{\alpha +1}} dt-\!\!
 \int\limits_{ 0}^{r-\varepsilon_{2}}\frac{(\rho f)(Q)r^{n-1}-(\rho f)(T)t^{n-1}}{r^{n-1}(r-t)^{\alpha +1}} dt\right|^{2}r^{n-1} dr    \right)^{\frac{1}{2}} 
 $$
 $$
 +\left(\int\limits_{\omega_{\varepsilon_{1}}}d\chi\int\limits_{0}^{\varepsilon_{2} }\left|\int\limits_{ 0}^{r-\varepsilon_{1}}
 \frac{(\rho f)(Q)r^{n-1} }{r^{n-1}(r-t)^{\alpha +1}} dt-
 \int\limits_{0}^{r- \varepsilon_{2}}\frac{(\rho f)(Q)r^{n-1} }{r^{n-1}(r-t)^{\alpha +1}} dt\right|^{2}r^{n-1} dr    \right)^{\frac{1}{2}} 
$$
$$
 =I_{1}+I_{2}+I_{3}.
$$
  Since  $f\in C_{0}^{\infty}(\Omega),$ then for sufficiently  small   $\varepsilon_{1}>0$ we have  $f(Q)=0,\,r<\varepsilon_{1}.$ This implies that   $I_{2}=I_{3}=0$ and that the second summand of the right side of   inequality   \eqref{21} equals zero.
Making the change the variable in $I_{1},$ then using the generalized Minkowski  inequality, we get
$$
 I_{1}  =\left(\int\limits_{\omega_{\varepsilon_{1}}}d\chi\int\limits_{\varepsilon_{1}}^{d}\left|\int\limits_{ \varepsilon_{1}}^{ \varepsilon_{2}}
 \frac{(\rho f)(Q)r^{n-1}-
 (\rho  f )(Q-\mathbf{e} t )(r-t)^{n-1}}{ r^{n-1}t ^{\alpha +1}} dt\right|^{2}r^{n-1} dr    \right)^{\frac{1}{2}} 
$$
$$ \leq\int\limits_{ \varepsilon_{2}}^{ \varepsilon_{1}}t^{-\alpha -1}\!\!\!\left(\int\limits_{\omega_{\varepsilon_{1}}}d \chi\int\limits_{\varepsilon_{1}}^{d}\left| (\rho  f )(Q)-(1-t/r)^{n-1}(\rho  f )(Q-\mathbf{e}t) \right|^{2}  r^{n-1} dr    \right)^{\frac{1}{2}}\!\! dt  
$$
$$
  \leq\int\limits_{ \varepsilon_{2}}^{ \varepsilon_{1}}t^{-\alpha -1}\!\!\!\left(\int\limits_{\omega_{\varepsilon_{1}}}d \chi\int\limits_{\varepsilon_{1}}^{d}\left| (\rho f )(Q)- (\rho f )(Q-\mathbf{e}t) \right|^{2}  r^{n-1} dr    \right)^{\frac{1}{2}} \!\!dt 
$$
$$
+\int\limits_{ \varepsilon_{2}}^{ \varepsilon_{1}}t^{-\alpha -1}\left(\int\limits_{\omega_{\varepsilon_{1}}}d \chi\int\limits_{\varepsilon_{1}}^{d}\left[ 1-( 1-t/r )^{n-1}\right]\left| (\rho f )(Q-\mathbf{e}t) \right|^{2}  r^{n-1} dr    \right)^{\frac{1}{2}}\!\!dt  
$$
$$
\leq C_{1}\!\!\int\limits_{ \varepsilon_{2}}^{ \varepsilon_{1}}t^{\lambda-\alpha-1 }d t+\int\limits_{ \varepsilon_{2}}^{ \varepsilon_{1}}t^{-\alpha }\left(\int\limits_{\omega_{\varepsilon_{1}}}d \chi\int\limits_{\varepsilon_{1}}^{d}
\left|\frac{1}{r} \sum\limits_{i=0}^{n-2}\left( \frac{t}{r}\right)^{i}(\rho f )(Q-\mathbf{e}t) \right|^{2}  r^{n-1} dr    \right)^{\!\!\frac{1}{2}}\!\!dt.
$$
Using  the  function $f$    property, we see that there  exists a constant $\delta$ such that $  f  (Q-\mathbf{e}t )=0,\;r<\delta.$  In accordance with the above reasoning,      we have
 $$
 I_{1} \leq
   C_{1}\frac{  \varepsilon^{  \lambda-\alpha}_{1}-\varepsilon^{\lambda-\alpha}_{2}   }{\!\!\!\!\lambda-\alpha  }+ \|f\|_{L_{2}(\Omega)}\frac{  \varepsilon^{  1- \alpha}_{1}-\varepsilon^{1- \alpha}_{2}   }{\!\!\!\delta(1-\alpha)  } (n-1) .
$$
Applying Theorem \ref{T1}, we complete  the     proof for the case    $ ( f\in C_{0}^{\infty}(\Omega)).$
Now assume that  $f\in H^1_{0}(\Omega),$ then there   exists the sequence $\{f_{n}\} \subset C_{0}^{\infty}(\Omega),\;   f_{n}\stackrel{ H^1_{0}}{\longrightarrow} f.$ It is easy to prove that $   \rho f_{n}\stackrel{ L_{2}}{\longrightarrow} \rho f.$
In accordance with the proven  above fact, we have $\rho f_{n}= \mathfrak{I} ^{\alpha}_{0+}\varphi_{n},\;\{\varphi_{n}\}\in L_{2}(\Omega),$ therefore
\begin{equation}\label{22}
 \mathfrak{I} ^{\alpha}_{0+}\varphi_{n}\stackrel{L_{2} }{\longrightarrow} \rho f.
\end{equation}
To conclude the proof, it is sufficient to show that   $\varphi_{n}\stackrel{L_{2} }{\longrightarrow}\varphi\in L_{2}(\Omega).$  Note that by virtue of  Theorem  \ref{T2} we have  $\mathfrak{ D} ^{\alpha}_{0+}\rho f_{n}=\varphi_{n}.$
Let  $ c_{n,m}:=f_{n+m}-f_{n},$  we have
$$
\|\varphi_{n+m}-\varphi_{n}\|_{L_{2}(\Omega)}\leq\frac{\alpha}{\Gamma(1-\alpha)}\left(\int\limits_{\Omega} \left|\int\limits_{0 }^{r}\frac{(\rho c_{n,m})(Q)r^{n-1}-
(\rho c_{n,m})(T)t^{n-1}}{r^{n-1}( t-r)^{\alpha +1}} dt\right|^{2} dQ    \right)^{\frac{1}{2}} 
$$
$$
+\frac{1}{\Gamma(1-\alpha)}\left(\int\limits_{\Omega}\left| \frac{(\rho c_{n,m})(Q) }{ r ^{\alpha }} \right|^{2}dQ    \right)^{\frac{1}{2}}=I_3 +I_4.
$$
Consider $I_{3}.$ It can be shown in the usual way that
$$
\frac{\Gamma(1-\alpha)}{\alpha} I_3  \leq
  \left\{\int\limits_{\Omega}\left|\int\limits_{0 }^{ r}\frac{  (\rho c_{n,m}) (Q)-  (\rho c_{n,m}) (Q-\mathbf{e}t) }{ t^{\alpha +1}} dt\right|^{2} dQ    \right\}^{\frac{1}{2}} 
$$
$$
  + \left\{\int\limits_{\Omega} \left|\int\limits_{0 }^{ r}\frac{(\rho c_{n,m})(Q-\mathbf{e}t)[1-(1- t/r)^{n-1} ]  }{ t^{1+\alpha   }} dt\right|^{2} dQ    \right\}^{\frac{1}{2}}=
    I_{01}+I_{02};
 $$
 $$
I_{01}\leq \sup\limits_{Q\in \Omega}|\rho(Q)|\left\{\int\limits_{\Omega}\left(\int\limits_{0 }^{ r}\frac{  |  c_{n,m} (Q)-    c_{n,m} (Q-\mathbf{e}t) |}{ t^{\alpha +1}} dt\right)^{2} dQ    \right\}^{\frac{1}{2}} 
$$
$$
+\left\{\int\limits_{\Omega}\left|\int\limits_{0 }^{ r}\frac{ c_{n,m} (Q-\mathbf{e}t)  [  \rho (Q)-    \rho (Q-\mathbf{e}t)] }{ t^{\alpha +1}} dt\right|^{2} dQ    \right\}^{\frac{1}{2}}=
 I_{11}+I_{21} .
$$
Applying  the generalized Minkowski  inequality, then representing the function under the inner integral by the directional derivative,   we get
$$
I_{11}
 \leq C_{1}\int\limits_{ 0}^{ \mathfrak{d}}t^{-\alpha -1}\left(\int\limits_{\Omega} \left| c_{n,m}(Q)-c_{n,m}(Q-\mathbf{e}t) \right|^{2}  dQ   \right)^{\frac{1}{2}} dt 
$$
$$
=C_{1}\int\limits_{ 0}^{ \mathfrak{d}}t^{-\alpha -1}\left(\int\limits_{\Omega} \left| \int\limits_{0}^{t} c'_{n,m} (Q-\mathbf{e}\tau) d\tau\right|^{2}   dQ\right)^{\frac{1}{2}} dt.
$$
Using the Cauchy-Schwarz inequality, the Fubini  theorem, we have
$$
I_{11} \leq C_{1}\int\limits_{ 0}^{ \mathfrak{d}}t^{-\alpha -1}\left(\int\limits_{\Omega}dQ \int\limits_{0}^{t} \left|c'_{n,m} (Q-\mathbf{e}\tau)\right|^{2}d\tau \int\limits_{0}^{t}d\tau    \right)^{\frac{1}{2}} dt 
$$

$$
  =C_{1}\int\limits_{ 0}^{ \mathfrak{d}}t^{-\alpha -1/2 }\left( \int\limits_{0}^{t}d\tau \int\limits_{\Omega}\left|c'_{n,m} (Q-\mathbf{e}\tau)\right|^{2} dQ   \right)^{\frac{1}{2}} dt\leq C_{1}
 \frac{ \mathfrak{d}^{1-\alpha}  }{1-\alpha  } \,  \|c'_{n,m} \|_{L_{2}(\Omega)}.
$$
 Arguing as above, using the Holder  property of the function $\rho,$ we see that
$$
I_{21}\leq M \int\limits_{ 0}^{ \mathfrak{d}}t^{\lambda-\alpha -1}\left(\int\limits_{\Omega} \left|  c_{n,m}(Q-\mathbf{e}t) \right|^{2}  dQ    \right)^{\frac{1}{2}} dt\leq
M\frac{ \mathfrak{d}^{\lambda-\alpha}  }{\lambda-\alpha } \,  \|c _{n,m} \|_{L_{2}(\Omega)}.
$$
It can be shown in the usual way that
$$
I_{02}\leq C_{1}\left\{\int\limits_{\Omega} \left|\int\limits_{0 }^{ r}  | c_{n,m}(Q-\mathbf{e}t)| \sum\limits_{i=0}^{n-2}\left(\frac{t}{r}\right)^{i}  r^{-1} t^{ -\alpha   }  dt\right|^{2} dQ    \right\}^{\frac{1}{2}} 
$$
 $$
\leq C_{2}\left\{\int\limits_{\Omega} \left(\int\limits_{0 }^{ r} t^{ -\alpha   }dt \int\limits_{t}^{r}\left| c'_{n,m}(Q-\mathbf{e}\tau)\right| d\tau      \right)^{2}r^{-2}  dQ    \right\}^{\frac{1}{2}} 
$$
$$
= C_{2}\left\{\int\limits_{\Omega} \left(\int\limits_{0 }^{ r}\left| c'_{n,m}(Q-\mathbf{e}\tau)\right| d\tau  \int\limits_{0}^{\tau}t^{ -\alpha   }dt      \right)^{2}r^{-2}  dQ    \right\}^{\frac{1}{2}} 
$$
 $$
 \leq\frac{C_{2}}{1-\alpha}\left\{\int\limits_{\Omega} \left(\int\limits_{0 }^{ r}  \left| c'_{n,m}(Q-\mathbf{e}\tau)\right| \tau^{ -\alpha} d\tau      \right)^{2}   dQ    \right\}^{\frac{1}{2}}.
$$
Applying  the generalized Minkowski  inequality, we have
$$
I_{02}\leq C_{3}\int\limits_{0 }^{ \mathfrak{d}} \tau^{ -\alpha} d\tau  \left(\int\limits_{\Omega} \left| c'_{n,m}(Q-\mathbf{e}\tau)\right|^{2} dQ     \right)^{\frac{1}{2}}   \leq
C_{3}\frac{\mathfrak{d}^{1-\alpha}}{1-\alpha} \|c'_{n,m}\|_{L_{2}(\Omega)}.
$$
Consider   $I_{2},$  we have  
$$
I_2  \leq\frac{C_{ 1}}{\Gamma(1-\alpha)}\left(\int\limits_{\Omega} \left| c_{n,m}(Q)\right|^{2}   r ^{-2\alpha  } dQ\right)^{ \frac{1}{2}}  
$$
$$
=\frac{C_{ 1}}{\Gamma(1-\alpha)} \left(\int\limits_{\Omega}  r ^{-2\alpha  } \left|\int\limits_{0}^{r} c'_{n,m}(Q-\mathbf{e}t)dt\right|^{2}dQ     \right)^{\frac{1}{2}} 
 $$
 $$
 \leq\frac{C_{ 1}}{\Gamma(1-\alpha)} \left(\int\limits_{\Omega}    \left|\int\limits_{0}^{r} c'_{n,m}(Q-\mathbf{e}t)t^{-\alpha}dt\right|^{2}dQ     \right)^{\frac{1}{2}}.
$$
Using the generalized Minkowski inequality, then applying the  trivial estimates,  we get
$$
 I_2\leq  C_{ 4} \left\{\int\limits_{\omega}\left[\int\limits_{0}^{d} t^{-\alpha} dt  \left(\int\limits_{t}^{d}|c'_{n,m}(Q-\mathbf{e}t)|^{2}   r^{ n-1 }dr\right)^{\frac{1}{2}} \right]^{2}d\chi  \right\}^{\frac{1}{2}}  
$$
$$
  \leq  C_{ 4} \left\{\int\limits_{\omega}\left[\int\limits_{0}^{\mathfrak{d}} t^{-\alpha} dt  \left(\int\limits_{0}^{d}|c'_{n,m}(Q-\mathbf{e}t)|^{2}   r^{ n-1 }dr\right)^{\frac{1}{2}} \right]^{2}d\chi  \right\}^{\frac{1}{2}}  
$$
$$
  =  C_{ 4}\int\limits_{0}^{\mathfrak{d}} t^{-\alpha}   dt  \left(\int\limits_{\omega}d\chi\int\limits_{0}^{d}|c'_{n,m}(Q-\mathbf{e}t)|^{2}   r^{ n-1 }dr\right)^{\frac{1}{2}}       \leq  C_{ 4}\frac{\mathfrak{d}^{1-\alpha}}{1-\alpha}\|c'_{n,m}\|_{L_{2}(\Omega)}.
$$
  Taking into account that the sequences    $\{f_{n}\},\{f'_{n}\}$ are fundamental, we obtain       $I_{1},I_{2}\rightarrow 0.$ Hence the  sequence  $\{\varphi_{n}\} $ is fundamental and  $\varphi_{n}\stackrel{L_{2} }{\longrightarrow}\varphi\in L_{2}(\Omega).$  Note that by virtue  of  Theorem \ref{T1} the  directional fractional integral   operator  is bounded on the space $L_{2}(\Omega).$ Hence
$$
 \mathfrak{I }^{\alpha}_{0 +}\varphi_{n}\stackrel{L_{2} }{\longrightarrow}  \mathfrak{I} ^{\alpha}_{0+}\varphi.
$$
Combining this fact with   \eqref{22}, we have   $\rho f=  \mathfrak{I} ^{\alpha}_{0+}\varphi.$

\end{proof}
 \begin{lemma}\label{L1}
 The operator $\mathfrak{D}^{ \alpha }$ is a restriction  of the  operator $\mathfrak{D}^{ \alpha }_{0+}.$
\end{lemma}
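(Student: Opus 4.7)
The plan is to show that for every $f\in\mathrm{D}(\mathfrak{D}^{\alpha})$ one has $f\in\mathrm{D}(\mathfrak{D}^{\alpha}_{0+})$ together with the equality $\mathfrak{D}^{\alpha}_{0+}f=\mathfrak{D}^{\alpha}f$. Concretely, I will prove that $\mathfrak{D}^{\alpha}_{0+,\varepsilon}f\to\mathfrak{D}^{\alpha}f$ in $L_p(\Omega)$ as $\varepsilon\downarrow 0$; this single convergence statement delivers both the domain inclusion and the agreement, since the limit in \eqref{8.1} is characterized uniquely.

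The first step is an algebraic rearrangement of the regularized kernel. On the set $\{\varepsilon\le r\le d\}$, split the numerator in \eqref{7} as
\[
 f(Q)r^{n-1}-f(T)t^{n-1}=\bigl[f(Q)-f(T)\bigr]t^{n-1}+f(Q)\bigl[r^{n-1}-t^{n-1}\bigr],
\]
so that $(\psi^{+}_{\varepsilon}f)(Q)$ becomes the sum of the truncated Kipriyanov integral $\int_{0}^{r-\varepsilon}[f(Q)-f(T)](r-t)^{-\alpha-1}(t/r)^{n-1}dt$ and a purely scalar factor $f(Q)J_{\varepsilon}(r)$, where $J_{\varepsilon}(r):=\int_{0}^{r-\varepsilon}\frac{r^{n-1}-t^{n-1}}{r^{n-1}(r-t)^{\alpha+1}}dt$. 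The change of variable $u=(r-t)/r$ turns $J_{\varepsilon}(r)$ into $r^{-\alpha}\int_{\varepsilon/r}^{1}\frac{1-(1-u)^{n-1}}{u^{\alpha+1}}du$, and a single integration by parts (writing $du/u^{\alpha+1}=-d(u^{-\alpha}/\alpha)$) evaluates the limit $\varepsilon\downarrow 0$ via the Beta function as $\frac{1}{\alpha r^{\alpha}}\bigl[\Gamma(1-\alpha)\,C^{(\alpha)}_{n}-1\bigr]$. Plugging back into \eqref{8} exhibits an exact cancellation: the constant $\frac{1}{\Gamma(1-\alpha)}f(Q)r^{-\alpha}$ combines with $\frac{\alpha}{\Gamma(1-\alpha)}f(Q)J_{\varepsilon}(r)$ to reproduce precisely $C^{(\alpha)}_{n}f(Q)r^{-\alpha}$, while the truncated integral tends pointwise to the full Kipriyanov integral. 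Consequently $\mathfrak{D}^{\alpha}_{0+,\varepsilon}f(Q)\to\mathfrak{D}^{\alpha}f(Q)$ a.e.\ on $\Omega$ as $\varepsilon\downarrow 0$. On the complementary layer $\{0\le r<\varepsilon\}$ the formulas \eqref{7}, \eqref{8} give $\mathfrak{D}^{\alpha}_{0+,\varepsilon}f(Q)=f(Q)/[\Gamma(1-\alpha)\varepsilon^{\alpha}]$, and this set has measure shrinking to zero.

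To promote the a.e.\ convergence to $L_{p}$ convergence I will invoke the dominated convergence theorem. A dominating function is obtained by separating $\mathfrak{D}^{\alpha}_{0+,\varepsilon}f$ into its scalar part (controlled by $|f(Q)|/r^{\alpha}$, which is in $L_{p}$ because $\mathfrak{D}^{\alpha}f\in L_{p}$ forces the same integrability on the singular-value part) and the truncated integral part, bounded by the full Kipriyanov integrand $\int_{0}^{r}|f(Q)-f(T)|(r-t)^{-\alpha-1}(t/r)^{n-1}dt$, which is in $L_{p}$ by the hypothesis $f\in\mathrm{D}(\mathfrak{D}^{\alpha})$. The contribution from the layer $\{r<\varepsilon\}$ vanishes because its Lebesgue measure tends to zero uniformly and because $|f|/\varepsilon^{\alpha}$ on this layer is bounded, in $L_p$-norm, by the already controlled tail behavior of $|f(Q)|/r^{\alpha}$. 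The main technical obstacle is precisely the $L_{p}$-domination of the truncated integral part; constructing it cleanly requires that the Kipriyanov expression $\mathfrak{D}^{\alpha}f$ be interpreted as an absolutely convergent integral for elements of its domain, which is the natural convention and is already implicit in the statement of the operator.
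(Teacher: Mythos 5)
Your argument is correct and rests on the same algebraic pivot as the paper's proof: splitting $f(Q)r^{n-1}-f(T)t^{n-1}$ into $[f(Q)-f(T)]t^{n-1}$ plus $f(Q)[r^{n-1}-t^{n-1}]$ and showing that the scalar remainder regenerates exactly $C^{(\alpha)}_{n}f(Q)r^{-\alpha}$ after cancellation with the term $\Gamma(1-\alpha)^{-1}f(Q)r^{-\alpha}$ in \eqref{8}. Where you differ is in how the constant is computed and in how the identity is interpreted. The paper expands $\frac{r^{n-1}-t^{n-1}}{r-t}=\sum_{i=0}^{n-2}r^{n-2-i}t^{i}$, invokes the tabulated formula for $(I^{1-\alpha}_{0+}t^{i})(r)$ from \cite{firstab_lit:samko1987}, and telescopes the resulting sum of Gamma ratios down to $C^{(\alpha)}_{n}$; you substitute $u=(r-t)/r$, integrate by parts once and read off the Beta function $B(1-\alpha,n-1)$, obtaining $\frac{1}{\alpha}\bigl[\Gamma(1-\alpha)C^{(\alpha)}_{n}-1\bigr]$ in one stroke --- I checked this value and it is right, so the cancellation goes through. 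Your route is shorter and avoids the induction; the paper's stays inside the fractional-integral calculus it has already set up. The second difference is that you track the truncation parameter and prove genuine $L_{p}$ convergence of $\mathfrak{D}^{\alpha}_{0+,\varepsilon}f$, which is what membership in $\mathrm{D}(\mathfrak{D}^{\alpha}_{0+})$ actually requires under \eqref{8.1}, whereas the paper manipulates the limit expressions formally and never addresses the passage $\varepsilon\downarrow 0$. The price you pay is the dominating function: you need $\int_{0}^{r}|f(Q)-f(T)|(r-t)^{-\alpha-1}(t/r)^{n-1}dt$ to lie in $L_{p}(\Omega)$ for $f\in\stackrel{\circ}{W_p ^l}(\Omega)$, and you assert this rather than prove it. It is true --- it follows from the same estimates that underlie \eqref{3}, since $|f(Q)-f(T)|$ is controlled by an integral of the directional derivative along the ray, exactly as in the computation of $I_{11}$ in the proof of Theorem \ref{T4} --- but it deserves a sentence of justification rather than an appeal to ``convention''. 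Finally, the paper additionally shows the restriction is proper, by exhibiting $f\in\mathfrak{I}^{\alpha}_{0+}(L_{p})$ with nonzero boundary trace; that is extra information not demanded by the statement, so its absence from your proof is not a defect.
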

\begin{proof}
We need to show that the  next equality holds
\begin{equation}\label{23}
 (\mathfrak{D}^{ \alpha }f)(Q)= \left(\mathfrak{D}^{ \alpha }_{0+} f\right)(Q), \,f\in \stackrel{\circ}{W_p ^l} (\Omega).
  \end{equation}
It can be shown in the usual way that
$$
 \mathfrak{D}^{ \alpha }v=\frac{\alpha}{\Gamma(1-\alpha)}\int\limits_{0}^{r} \frac{v(Q)-v(T)}{(r- t)^{\alpha+1}}  \left(\frac{t}{r}\right)  ^{n-1} dt+
  C_{n}^{(\alpha)}    v(Q)   r ^{ -\alpha} 
$$
$$
= \frac{\alpha}{\Gamma(1-\alpha)}\int\limits_{0}^{r} \frac{r^{n-1}v(Q)-t^{n-1}v(T)}{r^{n-1}(r- t)^{\alpha+1}}dt-\frac{\alpha\,v(Q)}{\Gamma(1-\alpha)}\int\limits_{0}^{r}
\frac{ r^{n-1} -t^{n-1}  }{r^{n-1}(r- t)^{\alpha+1}}   dt 
$$
$$
+C_{n}^{(\alpha)}   v(Q)   r ^{  -\alpha}=
  ( \mathfrak{D} ^{\alpha}_{0+}v)(Q)-
\frac{\alpha \, v(Q) }{\Gamma(1-\alpha)} \sum\limits_{i=0}^{n-2}  r ^{ -1-i}\int\limits_{0}^{r} \frac{t^{i}}{(r- t)^{\alpha }}   dt 
$$
\begin{equation}\label{24}
+C_{n}^{(\alpha)}  v(Q)   r ^{  -\alpha}-\frac{v(Q)   r ^{  -\alpha}}{\Gamma(1-\alpha)}   =  ( \mathfrak{D} ^{\alpha}_{0+}v)(Q)- I_1 +I_2 -I_3.
\end{equation}
  Using the formula of the fractional integral of a power  function (2.44) \cite[p.47]{firstab_lit:samko1987}, we have
$$
I_1 =
\frac{\alpha\, v(Q)\,r ^{-1 } }{\Gamma(1-\alpha)}\int\limits_{0}^{r} \frac{dt}{(r- t)^{\alpha }}      +\frac{\alpha\, v(Q) }{\Gamma(1-\alpha)} \sum\limits_{i=1}^{n-2}
r ^{-1-i}\int\limits_{0}^{r} \frac{t^{i}}{(r- t)^{\alpha }}   dt 
$$
$$
  =
 v(Q)\frac{\alpha }{ \Gamma(2-\alpha)}r ^{ -\alpha }    +  v(Q)  \alpha \sum\limits_{i=1}^{n-2}
r ^{-1-i} (I^{1-\alpha}_{0+}t^{i})(r)     
$$
$$
=v(Q)\frac{\alpha }{ \Gamma(2-\alpha)}r ^{ -\alpha }    +  v(Q)  \alpha \sum\limits_{i=1}^{n-2}
r ^{ -\alpha}\frac{i!}{\Gamma(2-\alpha+i)}.
$$
Hence
$$
  I_1 +I_3
=\frac{v(Q)r ^{ -\alpha }}{ \Gamma(2-\alpha)}    +   v(Q)r ^{ -\alpha }  \alpha \sum\limits_{i=1}^{n-2}
 \frac{i!}{\Gamma(2-\alpha+i)}=
 \frac{2 v(Q)r ^{ -\alpha }}{ \Gamma(3-\alpha)}      
 $$
 $$
 +   v(Q)r ^{ -\alpha } \alpha \sum\limits_{i=2}^{n-2}
 \frac{i!}{\Gamma(2-\alpha+i)}=
 \frac{3!v(Q)r ^{ -\alpha }}{ \Gamma(4-\alpha)}     +    v(Q)r ^{ -\alpha } \alpha \sum\limits_{i=3}^{n-2}
 \frac{i!}{\Gamma(2-\alpha+i)} 
$$
\begin{equation}\label{25}
 =\frac{(n-2)!v(Q)r ^{ -\alpha }}{ \Gamma(n-1-\alpha)}     +    v(Q)r ^{ -\alpha } \alpha
 \frac{(n-2)!}{\Gamma(n-\alpha )}=C_{n}^{(\alpha)}v(Q)r ^{ -\alpha }.
\end{equation}
Therefore $I_{2}-I_{1}-I_{3}=0$
and obtain   equality \eqref{23}. Let us prove that the considered operators do  not coincide with each other. For this purpose consider the function
  $f= \mathfrak{I}^{\alpha}_{0+}  \varphi,\;\varphi \in L_{p}(\Omega) ,$  then in accordance with  Theorem \ref{T2}, we have $ \mathfrak{D} _{0+} ^{ \alpha }  \mathfrak{I}^{\alpha}_{0+}   \varphi  =\varphi.$ Hence  $\mathfrak{I}^{\alpha}_{0+} \left( L_{p}\right)\subset\mathrm{D}\left(\mathfrak{D}^{ \alpha }_{0+}\right).$ Now  it is sufficient to notice  that
$$
 \exists f\in \mathfrak{I}^{\alpha}_{0+} \left( L_{p}\right) ,\;f(\Lambda)\neq 0,
$$
where $\Lambda \subset\partial\Omega,\;{\rm mess}\, \Lambda\neq 0.$
On the other hand, we know that
$$
f(\partial \Omega)= 0\;a.e.,\;\forall f\in \mathrm{D}\left(\mathfrak{D}^{ \alpha }  \right) .
$$

\end{proof}
\begin{lemma}\label{L2} The following identity  holds
\begin{equation*}
  (\mathfrak{D} _{0+} ^{ \alpha })^{*}  = \mathfrak{D }^{\alpha}_{d-},
\end{equation*}
where    limits \eqref{8.1} are understood as  the limits with respect to the $L_{2}(\Omega)$ norm.
 \end{lemma}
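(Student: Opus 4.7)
The strategy is to reduce the problem to an adjointness identity for the underlying directional fractional integrals. The key lemma I would first establish is
$$
(\mathfrak{I}^{\alpha}_{0+} u, v)_{L_{2}(\Omega)} = (u, \mathfrak{I}^{\alpha}_{d-} v)_{L_{2}(\Omega)}, \qquad u,v \in L_{2}(\Omega),
$$
that is, $(\mathfrak{I}^{\alpha}_{0+})^{*} = \mathfrak{I}^{\alpha}_{d-}$ as bounded operators on $L_{2}(\Omega)$. To see this, I would write the inner product in polar-type coordinates around $P$ as $\int_{\omega} d\chi \int_{0}^{d(\mathbf{e})} (\cdot)\, r^{n-1} dr$. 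The weight $(t/r)^{n-1}$ inside $\mathfrak{I}^{\alpha}_{0+}$ combines with $r^{n-1}\, dr$ to produce an unweighted $t^{n-1}\, dt$, and then Tonelli's theorem on the triangle $\{0 < t < r < d(\mathbf{e})\}$, justified by the absolute estimates in Theorem \ref{T1}, identifies the inner integral with $(\mathfrak{I}^{\alpha}_{d-} v)(P+t\mathbf{e})$.

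Next I would invoke Theorems \ref{T2} and \ref{T3} to identify
$$
\mathrm{D}(\mathfrak{D}^{\alpha}_{0+}) = \mathfrak{I}^{\alpha}_{0+}(L_{2}), \qquad \mathrm{D}(\mathfrak{D}^{\alpha}_{d-}) = \mathfrak{I}^{\alpha}_{d-}(L_{2}),
$$
with $\mathfrak{D}^{\alpha}_{0+}$ and $\mathfrak{D}^{\alpha}_{d-}$ acting as two-sided inverses of the corresponding integrals. Density of $\mathrm{D}(\mathfrak{D}^{\alpha}_{0+})$ in $L_{2}(\Omega)$, needed for the adjoint to be unambiguously defined, follows because $\mathfrak{I}^{\alpha}_{d-}$ is injective (Theorem \ref{T3}) and $\overline{\mathrm{R}(\mathfrak{I}^{\alpha}_{0+})} = \mathrm{Ker}(\mathfrak{I}^{\alpha}_{d-})^{\perp} = L_{2}(\Omega)$. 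For the inclusion $\mathfrak{D}^{\alpha}_{d-} \subset (\mathfrak{D}^{\alpha}_{0+})^{*}$, take $f = \mathfrak{I}^{\alpha}_{0+}\psi$ and $g = \mathfrak{I}^{\alpha}_{d-}\phi$, where $\psi = \mathfrak{D}^{\alpha}_{0+}f$ and $\phi = \mathfrak{D}^{\alpha}_{d-}g$; the integral identity above gives
$$
(\mathfrak{D}^{\alpha}_{0+} f, g) = (\psi, \mathfrak{I}^{\alpha}_{d-} \phi) = (\mathfrak{I}^{\alpha}_{0+}\psi, \phi) = (f, \mathfrak{D}^{\alpha}_{d-} g).
$$

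For the reverse inclusion, let $g \in \mathrm{D}((\mathfrak{D}^{\alpha}_{0+})^{*})$ with $(\mathfrak{D}^{\alpha}_{0+})^{*} g = h$. For an arbitrary $\psi \in L_{2}(\Omega)$, the function $f = \mathfrak{I}^{\alpha}_{0+}\psi$ lies in $\mathrm{D}(\mathfrak{D}^{\alpha}_{0+})$ with $\mathfrak{D}^{\alpha}_{0+}f = \psi$ by Theorem \ref{T3}, so
$$
(\psi, g) = (\mathfrak{D}^{\alpha}_{0+}f, g) = (f, h) = (\mathfrak{I}^{\alpha}_{0+}\psi, h) = (\psi, \mathfrak{I}^{\alpha}_{d-} h).
$$
Since $\psi$ is arbitrary in $L_{2}(\Omega)$, we conclude $g = \mathfrak{I}^{\alpha}_{d-} h$, and Theorem \ref{T3} then places $g$ in $\mathrm{D}(\mathfrak{D}^{\alpha}_{d-})$ with $\mathfrak{D}^{\alpha}_{d-} g = h$, completing the identification. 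The only technical nuisance I anticipate is the Tonelli interchange in the polar-type coordinates with the ray-dependent upper limit $d(\mathbf{e})$, but the $L_{2}$-bounds of Theorem \ref{T1} supply the required absolute integrability, so the interchange is routine and the remainder is a clean inverse/adjoint manipulation.
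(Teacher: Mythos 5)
Your proof is correct and follows essentially the same route as the paper's: the adjointness identity $(\mathfrak{I}^{\alpha}_{0+})^{*}=\mathfrak{I}^{\alpha}_{d-}$ obtained by Fubini in the polar-type coordinates (the computation in \eqref{27}, where the weight $(t/r)^{n-1}$ absorbs $r^{n-1}\,dr$ into $t^{n-1}\,dt$), the identification of the domains and inverses via Theorems \ref{T2} and \ref{T3}, and the two inclusions established by the same inverse/adjoint manipulations (your ``$\psi$ arbitrary'' step is just the paper's use of $\mathrm{R}(\mathfrak{D}^{\alpha}_{0+})=L_{2}(\Omega)$ in disguise). The only genuine addition is your explicit check that $\mathrm{D}(\mathfrak{D}^{\alpha}_{0+})$ is dense in $L_{2}(\Omega)$ so that the adjoint is well defined, a point the paper leaves implicit.
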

\begin{proof}
Let us  show that the next relation is true
\begin{equation}\label{26}
 (\mathfrak{D}^{ \alpha }_{0+} f ,  g  )_{L_{2}(\Omega)}
=(f , \mathfrak{D }^{\alpha}_{d-} g )_{L_{2}(\Omega)},
\end{equation}
$$
 f\in \mathfrak{I}^{\alpha}_{0+} \left( L_{2}\right),\,g\in  \mathfrak{I } ^{\alpha}_{d-}\left(  L_{2}\right) .
$$
Note that by virtue of Theorem  \ref{T3}, we have    $\,\mathfrak{D}^{ \alpha }_{0+} \mathfrak{I}^{\alpha}_{0+} \varphi   =\varphi ,\,
\mathfrak{D }^{\alpha}_{d-}  \mathfrak{I } ^{\alpha}_{d-}   \psi  =\psi  ,$ where $\psi,\psi\in L_{2}(\Omega).$
Hence, using  Theorem \ref{T1}, we have  that the  left and right side of \eqref{26} are finite. Therefore
using the Fubini  theorem, we have
$$
(\mathfrak{D}^{ \alpha }_{0+} f ,  g  )_{L_{2}(\Omega)}=\int\limits_{\omega}d\chi \int\limits_{0}^{d}\varphi(Q)
\overline{\left(\mathfrak{I } ^{\alpha}_{d-}\psi\right)(Q)}r^{n-1}dr 
$$
$$
 =\frac{1}{\Gamma(\alpha)}\int\limits_{\omega}d\chi\int\limits_{0}^{d}\varphi(Q)r^{n-1}dr\int\limits_{r}^{d}\frac{\overline{\psi(T)}}{(t-r)^{1-\alpha}}\,dt 
$$
\begin{equation}\label{27}
=\frac{1}{\Gamma(\alpha)}\int\limits_{\omega}d\chi\int\limits_{0}^{d}\overline{\psi(T)}t^{n-1}dt\int\limits_{0}^{t}\frac{\varphi(Q)}{(t-r)^{1-\alpha}}\left( \frac{r}{t}\right)^{n-1}dr 
$$
$$
=\int\limits_{\Omega} \left(\mathfrak{I} ^{\alpha}_{0+} \varphi\right)(Q)\,\overline{\psi(Q)} \, dQ=(f , \mathfrak{D }^{\alpha}_{d-} g )_{L_{2}(\Omega)}.
\end{equation}
Thus inequality \eqref{26} is proved. It follows that   $\mathrm{D}(\mathfrak{D }^{\alpha}_{d-})\subset\mathrm{D}\left( [\mathfrak{D}_{0+}^{ \alpha}]^{*}\right).$    Let us prove  that  $\mathrm{D}\left( [\mathfrak{D}_{0+}^{ \alpha}]^{*}\right)\subset
\mathrm{D}(\mathfrak{D }^{\alpha}_{d-}).$
In accordance with the definition of  adjoint  operator, we have
\begin{equation*}
\left(\mathfrak{D}^{ \alpha }_{0+} f,g \right)_{L_{2}(\Omega)}=\left(   f, [\mathfrak{D}_{0+}^{ \alpha} g]^{*}  \right)_{L_{2}(\Omega)},\,f\in \mathrm{D}(\mathfrak{D}^{ \alpha }_{0+}),\,g\in \mathrm{D}\left( [\mathfrak{D}_{0+}^{ \alpha}]^{*}\right).
\end{equation*}
 Note that since  $\mathrm{R}(\mathfrak{D }^{\alpha}_{d-})=L_{2}(\Omega),$ then  $\mathrm{R}
\left([\mathfrak{D}_{0+}^{ \alpha}]^{*}\right)=L_{2}(\Omega).$ Using the Fubini  theorem,  it can be  easily shown that
$$
\left( \mathfrak{D}^{ \alpha }_{0+} f,g- \mathfrak{I } ^{\alpha}_{d-}[\mathfrak{D}_{0+}^{ \alpha}]^{*}g    \right)_{L_{2}(\Omega)}=0.
$$
By virtue  of Theorem \ref{T3}, we have  $\mathrm{R}(\mathfrak{D}^{ \alpha }_{0+})=L_{2}(\Omega).$    Hence    $ g= \mathfrak{I } ^{\alpha}_{d-}[\mathfrak{D}_{0+}^{ \alpha}]^{*}g $ a.e.
It implies that $\mathrm{D}\left([\mathfrak{D}_{0+}^{ \alpha}]^{*}\right)\subset\mathrm{D}\left(\mathfrak{D }^{\alpha}_{d-}\right).$
\end{proof}

\section{ Strictly accretive  property}
 The  following theorem establishes the strictly accretive  property  (see \cite[p. 352]{firstab_lit:kato1966}) of the    Kipriyanov  fractional differential operator.\\\\

\begin{theorem}\label{T5}
  Suppose  $\rho(Q)$ is a real non-negative function,   $\rho\in{\rm Lip}\, \lambda,\; \lambda>\alpha;$
  then the following inequality    holds
\begin{equation}\label{28}
 {\rm Re} ( f,\mathfrak{D}^{\alpha}f)_{L_2(\Omega,\rho)}\geq\mu\|f\|^{2}_{L_2(\Omega,\rho)},\;f\in H^{1}_{0} (\Omega),
\end{equation}
where
$$
\mu=\frac{1}{2}\mathfrak{d}^{-\alpha} \left(  \Gamma^{-1}(1-\alpha) +C_{n}^{(\alpha)}\right)-
\frac{\alpha M  \mathfrak{d}^{\lambda-\alpha}  }{2\Gamma(1-\alpha)(\lambda-\alpha)\inf  \rho}\,.
$$
Moreover, if  we have in additional that for any fixed direction $\mathbf{e}$ the function  $\rho$ is   monotonically  non-increasing,   then
$$
\mu=\frac{1}{2}\mathfrak{d}^{-\alpha} \left(  \Gamma^{-1}(1-\alpha) +C_{n}^{(\alpha)}\right).
$$

\end{theorem}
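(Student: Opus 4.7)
The plan is a polarization--commutator argument. First I rewrite $2\,\mathrm{Re}(f\overline{\mathfrak{D}^{\alpha}f})$ as $\mathfrak{D}^{\alpha}|f|^{2}$ plus two manifestly non-negative terms, and then reduce the integral of $\rho\cdot\mathfrak{D}^{\alpha}|f|^{2}$ to a main positive piece plus a commutator in which the H\"older regularity of $\rho$ supplies the error.

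Applying the algebraic identity $2\,\mathrm{Re}\bigl(f(Q)\overline{[f(Q)-f(T)]}\bigr)=\bigl(|f(Q)|^{2}-|f(T)|^{2}\bigr)+|f(Q)-f(T)|^{2}$ inside the singular integral defining $\mathfrak{D}^{\alpha}f(Q)$ will yield the pointwise identity
\[ 2\,\mathrm{Re}\bigl(f(Q)\overline{\mathfrak{D}^{\alpha}f(Q)}\bigr)=\mathfrak{D}^{\alpha}|f|^{2}(Q)+\frac{\alpha}{\Gamma(1-\alpha)}\!\!\int_{0}^{r}\!\!\frac{|f(Q)-f(T)|^{2}}{(r-t)^{\alpha+1}}\Bigl(\frac{t}{r}\Bigr)^{n-1}dt+C_{n}^{(\alpha)}|f(Q)|^{2}r^{-\alpha}, \]
first for $f\in C_{0}^{\infty}(\Omega)$ and then, by density, for $f\in H_{0}^{1}(\Omega)$, using Theorem~\ref{T4} with $\rho\equiv 1$ and the $L_{2}$-continuity estimate of Theorem~\ref{T1}. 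Multiplying by $\rho\geq 0$ and integrating, the Marchaud-squared middle term drops as non-negative, and the last contribution is at least $C_{n}^{(\alpha)}\mathfrak{d}^{-\alpha}\|f\|_{L_{2}(\Omega,\rho)}^{2}$ because $r\leq\mathfrak{d}$; what remains is to lower-bound $\int_{\Omega}\rho\,\mathfrak{D}^{\alpha}|f|^{2}\,dQ$.

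For this I exploit the exact pointwise commutator
\[ \rho(Q)\mathfrak{D}^{\alpha}g(Q)=\mathfrak{D}^{\alpha}[\rho g](Q)-\tilde R(Q),\qquad \tilde R(Q):=\frac{\alpha}{\Gamma(1-\alpha)}\!\!\int_{0}^{r}\!\!\frac{[\rho(Q)-\rho(T)]g(T)}{(r-t)^{\alpha+1}}\Bigl(\frac{t}{r}\Bigr)^{n-1}dt, \]
immediate from $\rho(Q)[g(Q)-g(T)]=[\rho(Q)g(Q)-\rho(T)g(T)]-[\rho(Q)-\rho(T)]g(T)$. The main term is then handled by the integral identity, valid for $h=\mathfrak{I}_{0+}^{\alpha}\varphi\in\mathfrak{I}_{0+}^{\alpha}(L_{2})$,
\[ \int_{\Omega}\mathfrak{D}^{\alpha}h\,dQ=\frac{1}{\Gamma(1-\alpha)}\int_{\omega}d\chi\int_{0}^{d(\mathbf{e})}\frac{h(P+t\mathbf{e})\,t^{n-1}}{(d(\mathbf{e})-t)^{\alpha}}\,dt, \]
proved by substituting the definition of $\mathfrak{I}_{0+}^{\alpha}\varphi$, swapping the order of integration, and recognising the inner integral as the beta function $B(\alpha,1-\alpha)=\Gamma(\alpha)\Gamma(1-\alpha)$. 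Applied to $h=\rho|f|^{2}\geq 0$, which lies in $\mathfrak{I}_{0+}^{\alpha}(L_{2})$ by Theorem~\ref{T4} after a standard truncation ensuring $|f|^{2}\in H_{0}^{1}(\Omega)$, and using $(d(\mathbf{e})-t)^{-\alpha}\geq\mathfrak{d}^{-\alpha}$, this gives $\int_{\Omega}\mathfrak{D}^{\alpha}[\rho|f|^{2}]\,dQ\geq\Gamma(1-\alpha)^{-1}\mathfrak{d}^{-\alpha}\|f\|_{L_{2}(\Omega,\rho)}^{2}$.

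To estimate the commutator error, the H\"older bound $|\rho(Q)-\rho(T)|\leq M(r-t)^{\lambda}$ reduces the kernel singularity in $\tilde R$ to $(r-t)^{-(\alpha+1-\lambda)}$; after a Fubini swap the inner $dr$-integral equals $(d(\mathbf{e})-t)^{\lambda-\alpha}/(\lambda-\alpha)\leq\mathfrak{d}^{\lambda-\alpha}/(\lambda-\alpha)$, and combined with $\|f\|_{L_{2}}^{2}\leq(\inf\rho)^{-1}\|f\|_{L_{2}(\Omega,\rho)}^{2}$ this produces
\[ \Bigl|\int_{\Omega}\tilde R\,dQ\Bigr|\leq\frac{\alpha M\mathfrak{d}^{\lambda-\alpha}}{\Gamma(1-\alpha)(\lambda-\alpha)\inf\rho}\|f\|_{L_{2}(\Omega,\rho)}^{2}. \]
Assembling the three pieces delivers \eqref{28} with $\mu$ equal to half of the stated sum; for the monotone refinement, $\rho$ non-increasing along each ray forces $\rho(Q)-\rho(T)\leq 0$ whenever $t<r$, whence $\tilde R\leq 0$ pointwise, $-\int_{\Omega}\tilde R\,dQ\geq 0$, and the Lipschitz correction disappears. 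The main obstacle is the integral formula for $\int_{\Omega}\mathfrak{D}^{\alpha}h\,dQ$ together with the regularity bookkeeping required to apply it to $h=\rho|f|^{2}$ when $f\in H_{0}^{1}(\Omega)$ is merely square integrable.
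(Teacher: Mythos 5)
Your proposal is correct and follows essentially the same route as the paper: your two identities combined reproduce exactly the paper's four-term pointwise decomposition of $\rho f\,\mathfrak{D}^{\alpha}f$ into $\tfrac12\mathfrak{D}^{\alpha}[\rho f^{2}]$, a non-negative squared difference, the H\"older-controlled commutator, and the $C_{n}^{(\alpha)}\rho f^{2}r^{-\alpha}$ term, and each piece is then estimated the same way (the paper evaluates $\int_{\Omega}\mathfrak{D}^{\alpha}h\,dQ$ via the adjoint relation $\int h\,\mathfrak{D}^{\alpha}_{d-}1$ rather than your equivalent Beta-function computation). The only genuine simplifications available to you are that for $f\in C_{0}^{\infty}(\Omega)$ no truncation is needed since $|f|^{2}\in C_{0}^{\infty}(\Omega)$, with the extension to $H_{0}^{1}(\Omega)$ done only at the level of the final inequality by density and estimate \eqref{3}, exactly as in the paper.
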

\begin{proof}
Consider a real  case and let $f\in C_{0}^{\infty}(\Omega),$ we have
\begin{equation}\label{28.0}
\rho(Q)f(Q)( \mathfrak{D}^{\alpha}f )(Q)=\frac{1}{2}( \mathfrak{D}^{\alpha}\rho f^{2} )(Q)  
$$
$$
+\frac{\alpha}{2\Gamma(1-\alpha)} \int\limits_{0}^{r} \frac{\rho(Q)[f (Q)- f(T)]^{2}}{(r - t)^{\alpha+1}}
\left(\frac{t}{r}\right)^{n-1}\!\!\!dt 
$$
$$
+\frac{\alpha}{2\Gamma(1-\alpha)} \int\limits_{0}^{r} \frac{f^{2}(T)[\rho (T)- \rho(Q)] }{(r - t)^{\alpha+1}}
\left(\frac{t}{r}\right)^{n-1}\!\!\!dt+ \frac{C_{n}^{\alpha}}{2}(\rho f^{2})(Q)r^{-\alpha} 
$$
$$
=I_{0}(Q)+I_{1}(Q)+I_{2}(Q)+I_{3}(Q).
\end{equation}
Applying Theorem \ref{4}, we have
\begin{equation}\label{28.1}
\int\limits_{\Omega} I_{0}(Q)dQ=\frac{1}{2}\int\limits_{\Omega}(\mathfrak{D}^{\alpha}_{d-}1)(Q)   (\rho f^{2} )(Q)dQ  
$$
$$
=\frac{1}{2\Gamma(1-\alpha)}\int\limits_{\Omega}(d(\mathbf{e})-r)^{-\alpha}    (\rho f^{2} )(Q)dQ\geq \frac{\mathfrak{d}^{-\alpha}}{2\Gamma(1-\alpha)}\|f\|^{2}_{L_{2}(\Omega,\rho)}.
\end{equation}

Using the Fubini theorem, it can be shown in the usual way that
\begin{equation}\label{28.2}
\left|\int\limits_{\Omega} I_{2}(Q)dQ\right|\leq\frac{\alpha}{2\Gamma(1-\alpha)}\int\limits_{\omega}d \chi \int\limits_{0}^{d(\mathbf{e})}r^{n-1}dr \int\limits_{0}^{r} \frac{f^{2}(T)|\rho (T)- \rho(Q)| }{(r - t)^{\alpha+1}}
\left(\frac{t}{r}\right)^{n-1}\!\!\!dt 
$$
$$
=\frac{\alpha}{2\Gamma(1-\alpha)}\int\limits_{\omega}d \chi \int\limits_{0}^{d(\mathbf{e})}f^{2}(T) t^{n-1}dt \int\limits_{t}^{d(\mathbf{e})} \frac{|\rho (T)- \rho(Q)| }{(r - t)^{\alpha+1}}
 dr 
$$
$$
=\frac{\alpha}{2\Gamma(1-\alpha)}\int\limits_{\omega}d \chi \int\limits_{0}^{d(\mathbf{e})}f^{2}(T) t^{n-1}dt \int\limits_{0}^{d(\mathbf{e})-t}
\frac{|\rho (Q-\tau \mathbf{e})- \rho(Q)| }{\tau^{\alpha+1}}
 d\tau 
$$
$$
\leq\frac{\alpha M}{2\Gamma(1-\alpha)}\int\limits_{\omega}d \chi \int\limits_{0}^{d(\mathbf{e})}f^{2}(T) t^{n-1}dt \int\limits_{0}^{d(\mathbf{e})-t} \tau^{\lambda-\alpha+1}
 d\tau  
$$
$$
\leq\frac{\alpha M  \mathfrak{d}^{\lambda-\alpha}}{2\Gamma(1-\alpha)(\lambda-\alpha)}\|f\|^{2}_{L_{2}(\Omega)} .
\end{equation}
Consider
\begin{equation}\label{28.3}
\int\limits_{\Omega} I_{3}(Q)dQ=C^{(\alpha)}_{n}\int\limits_{\Omega}(\rho f^{2})(Q)r^{-\alpha}dQ\geq\frac{ C^{(\alpha)}_{n} \mathfrak{d}^{-\alpha}}{2}\|f\|^{2}_{L_{2}(\Omega,\rho)}.
\end{equation}
 Combining \eqref{28.0},\eqref{28.1},\eqref{28.2},\eqref{28.3}, and the fact that  $I_{1} $ is  non-negative, we obtain
\begin{equation}\label{28.4}
 ( f,\mathfrak{D}^{\alpha}f)_{L_2(\Omega,\rho)}\geq\mu\|f\|^{2}_{L_2(\Omega,\rho)},\;f\in C^{\infty}_{0} (\Omega).
\end{equation}
In the case when  for any fixed direction $\mathbf{e}$ the function  $\rho$ is   monotonically  non-increasing, we have    $I_{2}\geq 0.$ Hence \eqref{28.4} is   fulfilled. Now assume    that   $f\in H_{0}^{1} (\Omega),$ then  there exists   a sequence $\{f_k\}\in C^{\infty}_{0}(\Omega),\,
f_k\stackrel {H_{0}^1 }{\longrightarrow}f.
$
Using this fact,  it is not hard to prove that
$
f_k\stackrel {L_{2}(\Omega,\rho) }{\longrightarrow}f.
$
Using   inequality \eqref{3}, we prove  that
$
\|\mathfrak{D}^{\alpha} f \|_{L_{2}(\Omega,\rho)}   \leq C  \| f \|_{H_0 ^1(\Omega)}.
$
Therefore
$
 \mathfrak{D}^{\alpha}f_k\stackrel {L_{2}(\Omega,\rho) }{\longrightarrow} \mathfrak{D}^{\alpha}f.
$
Hence using the continuity property of the inner product, we get
$$
( f_k,\mathfrak{D}^{\alpha}f_k)_{L_{2}(\Omega,\rho)}\rightarrow ( f ,\mathfrak{D}^{\alpha}f )_{L_{2}(\Omega,\rho)} .
$$
Passing to the limit on the left and right side of inequality \eqref{28.4}, we obtain
\begin{equation}\label{28.5}
   ( f,\mathfrak{D}^{\alpha}f)_{L_2(\Omega,\rho)}\geq\mu\|f\|^{2}_{L_2(\Omega,\rho)},\;f\in H^{1}_{0} (\Omega).
\end{equation}
 Now let us  consider the  complex  case. Note that the following   equality is true
\begin{equation}\label{31}
{\rm Re} ( f,\mathfrak{D}^{\alpha}f)_{L_2(\Omega,\rho)}= ( u,\mathfrak{D}^{\alpha}u)_{L_2(\Omega,\rho)}+
( v,\mathfrak{D}^{\alpha}v)_{L_2(\Omega,\rho)},\;u={\rm Re}f,\,v={\rm Im}f.
\end{equation}

Combining   \eqref{31}, \eqref{28.5},  we obtain  \eqref{28}.
 \end{proof}
 \section{  Sectorial property }
Consider a uniformly elliptic operator with real   coefficients and the Kipriyanov fractional derivative    in the final term
\begin{equation*}
 Lu:=-  D_{j} ( a^{ij} D_{i}u)  +\rho\, \mathfrak{D}^{ \alpha }u,\;\; (i,j=\overline{1,n}) ,
 \end{equation*}
 $$
 \; \mathrm{D}(L)=H^{2}(\Omega)\cap H^{1}_{0}(\Omega),
 $$
 \begin{equation}\label{33}
 a^{ij}(Q)\in C^{1}(\bar{\Omega})  ,\,a^{ij}\xi _{i}  \xi _{j}  \geq a_{0} |\xi|^{2} ,\,a_{0}>0,
\end{equation}
 \begin{equation}\label{34}
\rho(Q)>0,\;\rho(Q)\in {\rm Lip\,\lambda},\,\alpha<\lambda\leq1.
\end{equation}
We assume in additional that $\mu>0,$ here we use  the denotation  that is used in Theorem \eqref{T5}.
We also   consider the formal adjoint  operator
\begin{equation*}
 L^{+}u:=-  D_{i} ( a^{ij} D_{j}u)  + \mathfrak{D}  ^{\alpha}_{ d -}\rho u  ,
 \end{equation*}
 $$
 \;\mathrm{D}(L^{+})=\mathrm{D}(L),
 $$
 and   the   operator
\begin{equation*}
  H=\frac{1}{2}(L+L^{+}).
 \end{equation*}
 We   use  a special case of the Green  formula
 \begin{equation}\label{35}
-\int\limits_{\Omega}D_{j}(a^{ij}D_{i}u)\,\bar{v}\, dQ=\int\limits_{\Omega}a^{ij}D_{i}u\, \overline{D_{j}v}\,  dQ\,,\;u\in H^{2}(\Omega),v\in H_{0}^{1}(\Omega) .
\end{equation}
\begin{remark}\label{L3}
  The operators $L,L^{\!+}\!,H$  are closeable. We can easily check this fact,  if we apply    Theorem 3.4 \cite[p.337]{firstab_lit:kato1966}.
\end{remark}
We have the following lemma.
\begin{theorem}
\label{T6}
 The operators $\tilde{L},\,\tilde{L}^{+}$  are  strictly  accretive, their  numerical range  belongs to  the sector
\begin{equation*}
   \mathfrak{S}:= \{\zeta\in\mathbb{C}: \,|{\rm arg }\,(\zeta-\gamma)|\leq\theta\},
\end{equation*}
 where $\theta$ and $\gamma$ are defined by the coefficients of the operator $L.$
\end{theorem}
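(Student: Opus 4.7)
The plan is to establish the real-part lower bound (strict accretivity) and the imaginary-part control (sectoriality) first for $u$ in the core $\mathfrak{D}(L)=H^2(\Omega)\cap H^1_0(\Omega)$, and then extend both inequalities to the closure $\tilde L$ by density; the argument for $\tilde L^+$ proceeds in parallel, with Lemmas \ref{L1} and \ref{L2} used to move the right-sided Kipriyanov derivative off of $\rho u$ and onto the test function.

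For the first step I would evaluate $(Lu,u)_{L_2(\Omega)}$ by applying the Green identity \eqref{35} to the principal part, obtaining $\int_\Omega a^{ij}D_i u\,\overline{D_j u}\,dQ$, while the lower-order term equals $(\mathfrak{D}^\alpha u,u)_{L_2(\Omega,\rho)}$. Taking real parts and using the uniform ellipticity \eqref{33} on the first piece and Theorem \ref{T5} on the second, one obtains
$$
{\rm Re}(Lu,u)_{L_2(\Omega)}\geq a_0\|\nabla u\|_{L_2(\Omega)}^2+\mu\|u\|_{L_2(\Omega,\rho)}^2.
$$
Combining this with the Poincar\'e inequality on $H^1_0(\Omega)$ and the positivity $\inf_{\bar\Omega}\rho>0$ (ensured by $\rho>0$ and Lipschitz continuity on the compact set $\bar\Omega$), the right-hand side majorizes $c_1\|u\|_{H^1_0(\Omega)}^2$ and in particular $\gamma\|u\|_{L_2(\Omega)}^2$ for a suitable $\gamma>0$, which yields the strict accretivity half of the conclusion.

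For the imaginary part I would write $u=v+iw$ with real $v,w$; since $a^{ij}$ and $\rho$ are real, ${\rm Im}(Lu,u)_{L_2(\Omega)}$ collects only the antisymmetric contribution of the elliptic form (which vanishes when $a^{ij}$ is symmetric and is otherwise controlled pointwise by $C\|\nabla u\|_{L_2(\Omega)}^2$) together with the imaginary part of the fractional term, namely $(w,\mathfrak{D}^\alpha v)_{L_2(\Omega,\rho)}-(v,\mathfrak{D}^\alpha w)_{L_2(\Omega,\rho)}$. By the Cauchy--Schwarz inequality and the continuous embedding $\|\mathfrak{D}^\alpha u\|_{L_2(\Omega,\rho)}\leq C\|u\|_{H^1_0(\Omega)}$, which follows from \eqref{3} with $p=q=2,\,l=1$ at a fixed $\delta$ and was already invoked in the closing argument of the proof of Theorem \ref{T5}, I obtain
$$
|{\rm Im}(Lu,u)_{L_2(\Omega)}|\leq C_2\|u\|_{L_2(\Omega)}\|u\|_{H^1_0(\Omega)}\leq C_3\|u\|_{H^1_0(\Omega)}^2\leq C_4\bigl({\rm Re}(Lu,u)_{L_2(\Omega)}-\gamma\|u\|_{L_2(\Omega)}^2\bigr),
$$
where the last step uses the previous real-part lower bound. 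Setting $\theta=\arctan C_4$ places the numerical range of $L$ inside the sector $\mathfrak{S}$.

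The final step is extension to the closures: for $u\in\mathfrak{D}(\tilde L)$ there exists a sequence $u_k\in\mathfrak{D}(L)$ with $u_k\to u$ and $Lu_k\to\tilde L u$ in $L_2(\Omega)$, and both inequalities pass to the limit by continuity of the inner product. The $\tilde L^+$ case is handled by rewriting $(\mathfrak{D}^\alpha_{d-}\rho u,u)_{L_2(\Omega)}$ as $(u,\mathfrak{D}^\alpha_{0+}u)_{L_2(\Omega,\rho)}$ via Lemma \ref{L2}, and then identifying $\mathfrak{D}^\alpha_{0+}u$ with $\mathfrak{D}^\alpha u$ on $H^1_0(\Omega)$ via Lemma \ref{L1}, after which Theorem \ref{T5} again furnishes the required real-part lower bound. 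I expect the main technical obstacle to be the sharp bookkeeping of constants needed to guarantee $\theta<\pi/2$ and to express $\gamma$ and $\theta$ explicitly in terms of the structural data $a_0,\mu,\inf\rho,\sup\rho,\mathfrak{d}$.
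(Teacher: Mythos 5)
Your proposal follows essentially the same route as the paper's proof: the real-part bound comes from the Green identity \eqref{35}, the ellipticity condition \eqref{33}, Theorem \ref{T5} and the Friedrichs (Poincar\'e) inequality, while the imaginary part is split into the antisymmetric elliptic contribution and the antisymmetric fractional contribution controlled through estimate \eqref{3}; the paper only differs in bookkeeping, keeping the parameters $\varepsilon,\delta$ free and choosing $k$ so that the gradient terms cancel exactly, which is what produces the explicit expressions for $\gamma$ and $\theta$ in \eqref{49}. One cosmetic slip: your intermediate bound $|{\rm Im}(Lu,u)|\leq C_2\|u\|_{L_2(\Omega)}\|u\|_{H^1_0(\Omega)}$ does not cover the antisymmetric elliptic term, which is genuinely of order $\|\nabla u\|^2_{L_2(\Omega)}$, but your next step $\leq C_3\|u\|^2_{H^1_0(\Omega)}$ absorbs it, so the argument is unaffected.
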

\begin{proof}
Consider the operator $L .$ It is not hard to prove that
\begin{equation}\label{37}
-{\rm Re} \left( D_{j} [a^{ij} D_{i}f] ,f\right)_{L_{2}(\Omega)}\geq a_{0}\|   f  \|^{2}_{L^{1}_{2}(\Omega)},\;f\in \mathrm{D}(L).
\end{equation}
Hence
\begin{equation}\label{40}
  {\rm Re}  (f_{n}, L f_{n}  )_{L_{2}(\Omega)}\geq a_{0}\|   f_{n} \|^{2}_{L^{1}_{2}(\Omega)}+ {\rm Re}(f_{n},\mathfrak{D}^{\alpha}f_{n})_{L_2(\Omega,\rho)} ,\;\{f_{n}\}\subset\mathrm{D}(L).
\end{equation}
Assume that  $ f\in \mathrm{D}(\tilde{L}).$ In accordance with the definition, there  exists a sequence $\{f_{n}\}\subset\mathrm{D}(L),\,f_{n}\xrightarrow[  L ]{}f.$  By virtue of  \eqref{40},  we easily  prove that  $f\in H_{0}^{1}(\Omega).$  Using the continuity property of the inner product, we pass to the limit on the left and right side of inequality \eqref{40}. Thus, we have
\begin{equation}\label{41}
 {\rm Re}( f , \tilde{L} f   )_{L_{2}(\Omega)}\geq a_{0}\|   f  \|^{2}_{L^{1}_{2}(\Omega)}+ {\rm Re}( f ,\mathfrak{D}^{\alpha}f )_{L_2(\Omega,\rho)} ,\; f  \in\mathrm{D}(\tilde{L}).
\end{equation}
 By virtue of Theorem \ref{T5}, we can rewrite the previous inequality as follows
 \begin{equation}\label{42}
  {\rm Re}( f,\tilde{L}f  )_{L_{2}(\Omega)}\geq a_{0}\|f\|^{2}_{L^{1}_{2}(\Omega)}+\mu\|f\|^{2}_{L_{2}(\Omega,\rho)} ,\;f\in \mathrm{D}(\tilde{L}).
\end{equation}
Applying the     Friedrichs inequality    to the first summand of   the right side, we  get
  \begin{equation}\label{43}
  {\rm Re}  ( f,\tilde{L}f )_{L_{2}(\Omega)}\geq \mu_{1}\|f\|^{2}_{L_{2}(\Omega)},\;
 f  \in\mathrm{D}(\tilde{L}),\; \mu_{1}=a_{0}+\mu\inf\rho(Q)  .
 \end{equation}
Consider the  imaginary component   of the form, generated by the operator $L$  
\begin{equation}\label{44}
\left|{\rm Im} ( f,Lf    )_{L_{2}(\Omega)}\right|\leq   \left|\int\limits_{\Omega}
\left(a^{ij}  D_{i}u D_{j}v-a^{ij}  D_{i}v D_{j}u\right)dQ\right| 
$$
$$
+\left|  ( u,\mathfrak{D}^{\alpha}v )_{L_{2}(\Omega,\rho)} -( v,\mathfrak{D}^{\alpha}u   )_{L_{2}(\Omega,\rho)}\right|= I_{1}+I_{2}.
 \end{equation}
 Using the Cauchy Schwarz inequality for sums, the Young inequality,  we have
\begin{equation}\label{45}
a^{ij}  D_{i}u D_{j}v\leq a  |Du| |Dv| \leq
 \frac{a}{2}  \left(
|Du|^{2} +|Dv|^{2} \right),a(Q)\!=\!\left(\sum\limits_{i,j=1}^{n}
|a_{ij}(Q)|^{2} \right)^{\!\!1/2}\!\!.
\end{equation}
Hence
\begin{equation}\label{45.1}
 I_{1}\leq   a_{1}\|f\|^{2}_{L^{1}_{2}(\Omega)},\;a_{1}=\sup  a(Q)  .
\end{equation}
 Applying    inequality \eqref{3}, the Young inequality,    we get
\begin{equation}\label{46}
\left|( u,\mathfrak{D}^{\alpha}v )_{L_{2}(\Omega,p)}\right|\leq C_{1}\|u\|_{L_{2}(\Omega)}\|\mathfrak{D}^{\alpha}v\|_{L_{q}(\Omega)} 
$$
$$
 \leq
C_{1} \|u\|_{L_{2}(\Omega)}\left\{\frac{K}{\delta^{\nu}}\|v\|_{L_{2}(\Omega)}+\delta^{1-\nu}\|v\|_{L^{1}_{2}(\Omega)} \right\}  
$$
$$
\leq\frac{1}{\varepsilon} \|u\|^{2}_{L_{2}(\Omega)} + \varepsilon\left(\frac{ KC_{1}}{\sqrt{2}\delta^{\nu}}\right)^{2} \|v\|^{2}_{L_{2}(\Omega)}  +
 \frac{\varepsilon}{2}\left( C_{1}\delta^{1-\nu}\right)^{2} \|v\|^{2}_{L^{1}_{2}(\Omega)},
\end{equation}
where $ 2<q< 2n/(2\alpha-2+n).$
Hence
$$
 I_2   \leq\left|( u,\mathfrak{D}^{\alpha}v )_{L_{2}(\Omega,\rho)}\right|+\left|( v,\mathfrak{D}^{\alpha}u )_{L_{2}(\Omega,\rho)}\right|\leq \frac{1}{\varepsilon}\left(\|u\|^{2}_{L_{2}(\Omega)}+\|v\|^{2}_{L_{2}(\Omega)}\right) 
 $$
 $$
 +\varepsilon\left(\frac{ KC_{1}}{\sqrt{2}\delta^{\nu}}\right)^{2}\left(\|u\|^{2}_{L_{2}(\Omega)}+\|v\|^{2}_{L_{2}(\Omega)} \right)+\frac{\varepsilon}{2}\left( C_{1} \delta^{1-\nu}\right)^{2}\left(\|u\|^{2}_{L^{1}_{2}(\Omega)}+\|v\|^{2}_{L^{1}_{2}(\Omega)} \right) 
 $$
\begin{equation}\label{48}
= \left(\varepsilon \delta^{-2\nu}C_{2}  +\frac{1}{\varepsilon}\right) \|f\|^{2}_{L_{2}(\Omega)} +
 \varepsilon \delta^{2-2\nu} C_{3} \|f\|^{2}_{L^{1}_{2}(\Omega)}  .
\end{equation}
Taking into account  \eqref{44} and combining  \eqref{45.1},  \eqref{48}, we easily prove that
$$
\left|{\rm Im} ( f,\tilde{L}f    )_{L_{2}(\Omega)}\right|\leq
 \left(\varepsilon \delta^{-2\nu}C_{2}  +\frac{1}{\varepsilon}\right)   \|f\|^{2}_{L_{2}(\Omega)} + \left(\varepsilon \delta^{2-2\nu} C_{3}+a_{1}\right)   \|  f\|^{2}_{L_{2}^{1}(\Omega)}, f\in \mathrm{D}(\tilde{L}).
 $$
Thus  by virtue of    \eqref{43}   for an arbitrary number  $k>0,$  the next inequality holds
$$
{\rm Re}( f,\tilde{L}f    )_{L_{2}(\Omega)}-k \left|{\rm Im} ( f,\tilde{L}f    )_{L_{2}(\Omega)}\right|\geq
 \left(a_{0}-k    [\varepsilon\delta^{2-2\nu} C_{3}+a_{1}] \right)\|  f\|^{2}_{L_{2}^{1}(\Omega)} 
 $$
 $$
 +\left( \mu \inf \rho(Q)- k\left[\varepsilon \delta^{-2\nu}C_{2}  +\frac{1}{\varepsilon}\right] \right)\|f\|^{2}_{L_{2}(\Omega)}.
$$
 Choose  $k= a_{0}\left(\varepsilon \delta^{2-2\nu} C_{3}+ a_{1} \right)^{-1}\!\!,$ we get
\begin{equation}\label{49}
\left|{\rm Im} ( f,(\tilde{L}-\gamma) f    )_{L_{2}(\Omega)}\right|   \leq \frac{1}{k}{\rm Re}( f,(\tilde{L}-\gamma)f    )_{L_{2}(\Omega)},
$$
$$
\;\gamma= \mu \inf \rho(Q)- k\left[\varepsilon \delta^{-2\nu}C_{2}  +\frac{1}{\varepsilon}\right].
\end{equation}
This  inequality shows  that the numerical range    $\Theta(\tilde{L})$ belongs to the sector with the top   $\gamma$ and the semi-angle $\theta=\arctan(1/k).$
The prove corresponding  to the operator $\tilde{L}^{+}$ is analogous.
 \end{proof}
We do not   study in detail   the conditions under which  $\gamma>0,$  but we   just   note that relation     \eqref{49} gives us an opportunity to formulate them in an easy way. Further,     we assume that the coefficients of the operator $L$ such that $\gamma>0.$

 \begin{theorem}\label{T7}
   The operators $\tilde{L},\tilde{L}^{+},\tilde{H}$   is m-sectorial, the  operator $\tilde{H}$ is selfadjoint.
\end{theorem}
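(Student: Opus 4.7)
The approach is to recast $L$ (and $L^{+}$) as the operator associated with a closed sectorial sesquilinear form and then invoke Kato's representation theorems, from which m-sectoriality and (in the symmetric case) selfadjointness follow automatically. Accordingly, I introduce
\begin{equation*}
\mathfrak{t}[u,v]:=\int_{\Omega}a^{ij}D_{i}u\,\overline{D_{j}v}\,dQ+(\rho\,\mathfrak{D}^{\alpha}u,v)_{L_{2}(\Omega)},\qquad \mathrm{D}(\mathfrak{t}):=H^{1}_{0}(\Omega).
\end{equation*}
Using ellipticity \eqref{33}, the interpolation inequality \eqref{3}, Theorem \ref{T5} and the computations already performed in the proof of Theorem \ref{T6}, I verify that $\mathrm{Re}\,\mathfrak{t}[u,u]$ is equivalent to $\|u\|_{L^{1}_{2}(\Omega)}^{2}$ (hence, via the Friedrichs inequality, to the full $H^{1}_{0}(\Omega)$-norm), while $|\mathrm{Im}\,\mathfrak{t}[u,u]|$ and $|\mathfrak{t}[u,v]|$ are dominated by this same norm. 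Completeness of $H^{1}_{0}(\Omega)$ then yields that $\mathfrak{t}$ is a closed sectorial form whose numerical range lies in the sector produced in Theorem \ref{T6}.

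By Kato's first representation theorem \cite[Theorem VI.2.1]{firstab_lit:kato1966}, there is a unique m-sectorial operator $T$ with $\mathrm{D}(T)\subset H^{1}_{0}(\Omega)$ satisfying
\begin{equation*}
\mathfrak{t}[u,v]=(Tu,v)_{L_{2}(\Omega)},\qquad u\in\mathrm{D}(T),\;v\in H^{1}_{0}(\Omega),
\end{equation*}
and whose numerical range is contained in that of $\mathfrak{t}$. The classical Green formula \eqref{35} applied to the principal part gives $(Lu,v)_{L_{2}(\Omega)}=\mathfrak{t}[u,v]$ for $u\in\mathrm{D}(L)$, $v\in H^{1}_{0}(\Omega)$, whence $L\subset T$ and, by closedness, $\tilde{L}\subset T$. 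To reverse this inclusion I pass to the adjoint form $\mathfrak{t}^{*}[u,v]:=\overline{\mathfrak{t}[v,u]}$, whose associated m-sectorial operator is $T^{*}$; Lemma \ref{L2} combined with Theorem \ref{T4} (which guarantees $\rho v\in\mathfrak{I}^{\alpha}_{d-}(L_{2})$ for $v\in H^{1}_{0}(\Omega)$) yields the dual Green identity $(Lu,v)_{L_{2}(\Omega)}=(u,L^{+}v)_{L_{2}(\Omega)}$, hence $L^{+}\subset T^{*}$ and $\tilde{L}^{+}\subset T^{*}$. Since $T^{*}$ and $\tilde{L}^{+}$ are both closed and sectorial with numerical range in the conjugate sector of Theorem \ref{T6}, comparison through the sectorial resolvent estimate for $\tilde{L}^{+}$ forces $\tilde{L}^{+}=T^{*}$; taking adjoints gives $\tilde{L}=T$, so $\tilde{L}$ is m-sectorial. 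The symmetric argument applied to $\mathfrak{t}^{*}$ shows that $\tilde{L}^{+}$ is m-sectorial as well.

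For the selfadjointness of $\tilde{H}$, I observe that $\mathfrak{h}:=\tfrac{1}{2}(\mathfrak{t}+\mathfrak{t}^{*})$ is a closed Hermitian sectorial form on $H^{1}_{0}(\Omega)$ with real numerical range. By Kato's second representation theorem the operator associated with $\mathfrak{h}$ is selfadjoint, and a direct verification based on \eqref{35} and Lemma \ref{L2} gives $(Hu,v)_{L_{2}(\Omega)}=\mathfrak{h}[u,v]$ for $u\in\mathrm{D}(H)$ and $v\in H^{1}_{0}(\Omega)$, so that this operator coincides with $\tilde{H}$. The main obstacle in the program is the identification $\tilde{L}=T$: because the fractional term is nonlocal, elliptic regularity alone does not pin down $\mathrm{D}(T)$, so one must argue indirectly through the duality with $\tilde{L}^{+}$, with Lemma \ref{L2} and Theorem \ref{T4} as the essential ingredients that replace the usual integration by parts.
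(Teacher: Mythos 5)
Your overall strategy---realize $L$ as a restriction of the operator $T$ associated with a closed sectorial form via Kato's first representation theorem---is genuinely different from the paper's, which never invokes the form machinery at this stage: the paper proves m-sectoriality directly, by combining the sectorial estimate of Theorem \ref{T6} with Kato's stability theorem for the deficiency index (Theorem V.3.2), and then kills the deficiency by showing that $\mathrm{R}(\tilde{L}-\zeta_{0})^{\perp}$ annihilates $C^{\infty}_{0}(\Omega)$ for a point $\zeta_{0}$ in the left half-plane, using the accretivity inequality \eqref{52}. That direct computation is exactly the ingredient your argument is missing, and its absence is a genuine gap. The step ``since $T^{*}$ and $\tilde{L}^{+}$ are both closed and sectorial with numerical range in the conjugate sector, comparison through the sectorial resolvent estimate forces $\tilde{L}^{+}=T^{*}$'' is not valid: a closed sectorial operator contained in an m-sectorial one need not coincide with it. The sectorial estimate $\|(\tilde{L}^{+}-\zeta)u\|\geq \operatorname{dist}(\zeta,\mathfrak{S})\|u\|$ gives injectivity and closedness of the range of $\tilde{L}^{+}-\zeta$, but says nothing about surjectivity; a proper closed symmetric restriction of a selfadjoint operator with deficiency indices $(1,1)$ already furnishes a counterexample to the inference you are making. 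Equally, the chain $\tilde{L}\subset T\subset(\tilde{L}^{+})^{*}$ obtained by taking adjoints does not collapse to an equality unless one already knows $(\tilde{L}^{+})^{*}=\tilde{L}$, which is essentially equivalent to the zero-deficiency statement being sought.

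To close the gap you must either (a) reproduce the paper's argument: show $\mathrm{def}(\tilde{L}-\zeta_{0})=0$ for one $\zeta_{0}\in\mathbb{C}\setminus\mathfrak{S}$ by proving that any $g\perp\mathrm{R}(\tilde{L}-\zeta_{0})$ vanishes (the coercivity \eqref{52} tested against $C^{\infty}_{0}(\Omega)$, which is dense in $L_{2}(\Omega)$, does this), after which constancy of the deficiency index on $\mathbb{C}\setminus\mathfrak{S}$ and the bound $\|(\tilde{L}+\zeta)^{-1}\|\leq(\mathrm{Re}\,\zeta)^{-1}$ give m-accretivity; or (b) prove that $\mathrm{D}(L)=H^{2}(\Omega)\cap H^{1}_{0}(\Omega)$ is a core of $T$, which amounts to an $H^{2}$-regularity statement for solutions of $Tu=f$ (plausible on a convex domain since the fractional term is lower order by \eqref{3}, but it must be argued, and you explicitly decline to do so). The same identification problem recurs for $\tilde{H}$ and the form $\mathfrak{h}$. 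Note also that the paper itself only establishes $T_{t}=\tilde{L}$ and $T_{h}=\tilde{H}$ later (Lemmas \ref{L4}, \ref{L5}), and there it uses the already-proved m-sectoriality of $\tilde{L}$ and $\tilde{H}$ as an input via Kato's Theorem VI.2.9 on the Friedrichs extension---so relying on that identification here would be circular.
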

\begin{proof} By virtue  of  Theorem \ref{T6} we have  that the operator $\tilde{L}$ is sectorial i.e.
 $\Theta(L)\subset  \mathfrak{S}.$  Applying Theorem 3.2   \cite[p. 336]{firstab_lit:kato1966}
 we   conclude that $\mathrm{R}(\tilde{L}-\zeta)$ is a closed  space  for any $\zeta\in \mathbb{C}\setminus\mathfrak{S}$ and that the next relation     holds
\begin{equation}\label{50}
  {\rm def}(\tilde{L}-\zeta)=\eta,\; \eta={\rm const} .
 \end{equation} Using   \eqref{43}, it is not hard to prove that   $
  \|\tilde{L}f\|_{L_{2}(\Omega)}\geq \sqrt{\mu_{1}}\|f\|_{L_{2}(\Omega)},\,
 f  \in\mathrm{D}(\tilde{L}).
$ Hence the inverse operator $(\tilde{L}+\zeta)^{-1}$ is defined  on the subspace $\mathrm{R}(\tilde{L}+\zeta),\,{\rm Re}\zeta>0.$     In accordance with    condition (3.38) \cite[p.350]{firstab_lit:kato1966},   we need to show  that
\begin{equation}\label{51}
{\rm def}(\tilde{L}+\zeta)=0,\;\|(\tilde{L}+\zeta)^{-1}\|\leq ({\rm Re}\zeta)^{-1},\;{\rm Re}\zeta>0 .
\end{equation}
Since $\gamma>0,$  then  the left half-plane is  included    in the the set $\mathbb{C}\setminus \mathfrak{S}.$
Note that by virtue of inequality  \eqref{43}, we have
 \begin{equation}\label{52}
  {\rm Re}  ( f,(\tilde{L}-\zeta )f  )_{L_{2}(\Omega)}\geq  (\mu- {\rm Re} \zeta ) \|f\|^{2}_{L_{2}(\Omega)} .
 \end{equation}
 Let $\zeta_{0}\in\mathbb{C}\setminus \mathfrak{S},\;{\rm Re}\zeta_{0} <0.$
  Since the  operator $\tilde{L}-\zeta_{0}$ has a closed range     $\mathrm{R} (\tilde{L}-\zeta_{0}),$ then we have
\begin{equation*}
 L_{2}(\Omega)=\mathrm{R} (\tilde{L}-\zeta_{0})\oplus \mathrm{R} (\tilde{L}-\zeta_{0})^{\perp} .
 \end{equation*}
Note that   $  C^{\infty}_{0}(\Omega)\cap \mathrm{R} (\tilde{L}-\zeta_{0})^{\perp}=0,$   because if we assume   the contrary, then applying inequality  \eqref{52}  for any element
 $u\in C^{\infty}_{0}(\Omega)\cap \mathrm{R}  (\tilde{L}-\zeta_{0})^{\perp},$    we get
 \begin{equation*}
(\mu-{\rm Re}\zeta_{0}) \|u\|^{2}_{L_{2}(\Omega)} \leq {\rm Re} ( u,(\tilde{L}-\zeta_{0})u  )_{L_{2}(\Omega)}=0,
 \end{equation*}
hence $u=0.$ Thus  this fact   implies that
$$
\left(g,v\right)_{L_{2}(\Omega)}=0,\,g\in  \mathrm{R}  (\tilde{L}-\zeta_{0})^{\perp},\, \in C^{\infty}_{0}(\Omega).
$$
Since $  C^{\infty}_{0}(\Omega)$  is a dense set in $L_{2}(\Omega),$ then $\mathrm{R}  (\tilde{L}-\zeta_{0})^{\perp}=0.$ It follows that  ${\rm def} (\tilde{L}-\zeta_{0}) =0.$ Now if we note \eqref{50}
then we  came to the conclusion that ${\rm def} (\tilde{L}-\zeta )=0,\;\zeta\in \mathbb{C}\setminus\mathfrak{S}.$  Hence ${\rm def} (\tilde{L}+\zeta )=0,\;{\rm Re}\zeta>0.$   Thus the proof  of  the first relation of  \eqref{51} is complete.
To prove the second relation \eqref{51} we should  note that
 \begin{equation*}
(\mu +{\rm Re}\zeta ) \|f\|^{2}_{L_{2}(\Omega)} \leq {\rm Re} ( f,(\tilde{L}+\zeta )f  )_{L_{2}(\Omega)}\leq \|f\|_{L_{2}(\Omega)}\|(\tilde{L}+\zeta )\|_{L_{2}(\Omega)},
 \end{equation*}
 $$
 \;f\in \mathrm{D}(\tilde{L}),\;
{\rm Re}\zeta>0 .
 $$
Using    first relation   \eqref{51}, we have
 \begin{equation*}
\|(\tilde{L}+\zeta )^{-1}g\|_{L_{2}(\Omega)}     \leq(\mu  +{\rm Re}\,\zeta ) ^{-1} \|g\|_{L_{2}(\Omega)}\leq ( {\rm Re}\,\zeta ) ^{-1} \|g\|_{L_{2}(\Omega)},\,g\in L_{2}(\Omega).
 \end{equation*}
 This implies that
$$
\|(\tilde{L}+\zeta )^{-1} \| \leq( {\rm Re}\,\zeta ) ^{-1},\,{\rm Re}\zeta>0.
$$
This concludes the proof  corresponding to the operator  $\tilde{L}.$  The proof  corresponding to the operator  $\tilde{L}^{+}$ is     analogous.
Consider the  operator $\tilde{H}.$ It is obvious that   $ \tilde{H}  $ is a symmetric operator. Hence   $ \Theta(\tilde{H})\subset \mathbb{R}.  $ Using \eqref{40} and arguing as above, we see that
 \begin{equation}\label{52.0}
    ( f,\tilde{H}f  )_{L_{2}(\Omega)}\geq  \mu_{1}  \|f\|^{2}_{L_{2}(\Omega)} .
 \end{equation}
 Continuing the used above  line of reasoning  and applying Theorem 3.2 \cite[p.336]{firstab_lit:kato1966},  we see that
  \begin{equation}\label{52.1}
{\rm def}(\tilde{H}-\zeta )=0,\,{\rm Im }\zeta\neq 0;
\end{equation}
  \begin{equation}\label{52.2}
   {\rm def}(\tilde{H}+\zeta)=0,\;\|(\tilde{H}+\zeta)^{-1}\|\leq ({\rm Re}\zeta)^{-1},\;{\rm Re}\zeta>0 .
\end{equation}
Combining \eqref{52.1} with   Theorem 3.16  \cite[p.340]{firstab_lit:kato1966}, we conclude that  the operator $\tilde{H}$ is selfadjoint.
Finally, note that in accordance with the definition,   relation  \eqref{52.2} implies   that the  operator $\tilde{H}$ is m-accretive.
  Since we  already know  that the operators $\tilde{L},\tilde{L}^{+},\tilde{H}$ are sectorial and m-accretive, then  in accordance with the definition they are m-sectorial.
 \end{proof}

\section{Main theorems}

In this section we need using the theory of sesquilinear forms. If it is not stated otherwise, we use the definitions and  the  notation  of the monograph \cite {firstab_lit:kato1966}.
Consider the forms
\begin{equation}\label{53}
t[u,v] = \int\limits_{\Omega} a^{ij}D_{i}u\, \overline{D_{j}v}dQ+ \int\limits_{\Omega}  \rho\,\mathfrak{D}^{\alpha }u \, \bar{v } dQ, \; u,v\in H^{1}_{0}(\Omega) ,
\end{equation}
  \begin{equation*}
t^{*}[u,v]: =\overline{t [v,u]} =\int\limits_{\Omega} a^{ij}D_{j}u\, \overline{D_{i}v}dQ+ \int\limits_{\Omega}   u  \rho\,  \overline{\mathfrak{D}^{\alpha }v }dQ,
\end{equation*}
$$
\mathfrak{Re} t :=\frac{1}{2}(t+t^{*}).
$$
For convenience, we use the shorthand notation $h:=\mathfrak{Re} t.$

 \begin{lemma}\label{L4}
The form $t$ is a closed sectorial form, moreover  $t=\mathfrak{\tilde{f}}, $ where
 \begin{equation*}
 \mathfrak{ f }[u,v]=(\tilde{L}u,v)_{L_{2}(\Omega)},\;u,v\in \mathrm{D}(\tilde{L}).
 \end{equation*}
\end{lemma}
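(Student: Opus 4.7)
My plan is to decompose $t = t_1 + t_2$, where
\[
t_1[u,v] := \int_{\Omega} a^{ij}D_{i}u\,\overline{D_{j}v}\,dQ, \qquad t_2[u,v] := \int_{\Omega}\rho\,\mathfrak{D}^{\alpha}u\,\bar v\,dQ,
\]
both defined on $H^{1}_{0}(\Omega)$, and to reduce closedness and sectoriality of $t$ to a standard perturbation result for sectorial forms (Kato, Ch.~VI, Thm.~1.33). First I would observe that $t_1$ is closed sectorial on $H^{1}_{0}(\Omega)$: the symmetric part $\mathfrak{Re}\,t_1$ is equivalent to the $H^{1}_{0}$ inner product by ellipticity \eqref{33} and the Friedrichs inequality, and boundedness of the antisymmetric part on $H^{1}_{0}$ is immediate from $a^{ij}\in C^{1}(\bar\Omega)$.

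The core step is to show that $t_2$ is relatively $t_1$-bounded with bound $0$. For this I will combine Theorem \ref{T5} (giving $\mathfrak{Re}\,t_2[u,u]\geq\mu\|u\|^{2}_{L_2(\Omega,\rho)}\geq 0$, which already takes care of the lower semi-boundedness of $\mathfrak{Re}\,t$) with the estimate \eqref{3} applied at $l=1$, $p=2$ and some $q>2$ small enough to satisfy \eqref{2}. Taking $q'$ conjugate to $q$, Hölder's inequality together with the Sobolev embedding $L_2(\Omega)\hookrightarrow L_{q'}(\Omega)$ (finite volume, $q'<2$) yields
\[
|t_2[u,v]|\leq \|\rho\|_{\infty}\|\mathfrak{D}^{\alpha}u\|_{L_q(\Omega)}\|v\|_{L_{q'}(\Omega)}\leq C\bigl(K\delta^{-\nu}\|u\|_{L_2}+\delta^{1-\nu}\|u\|_{H^{1}_{0}}\bigr)\|v\|_{L_2}.
\]
Polarizing and applying Young's inequality then gives, for arbitrary $\eta>0$,
\[
|t_2[u,u]|\leq \eta\,t_1[u,u]+C(\eta)\|u\|^{2}_{L_2},
\]
which is precisely the relative form-bound $0$ property. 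A parallel estimate for $\mathfrak{Im}\,t_2$ controls the sector angle, essentially reproducing the calculation already carried out in the proof of Theorem \ref{T6}. Invoking the perturbation theorem then yields that $t=t_1+t_2$ is closed and sectorial.

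For the identification $t=\tilde{\mathfrak{f}}$ I will proceed in two moves. First, applying the Green formula \eqref{35} to any $u\in \mathrm{D}(L)=H^{2}(\Omega)\cap H^{1}_{0}(\Omega)$ and $v\in H^{1}_{0}(\Omega)$,
\[
(Lu,v)_{L_2(\Omega)}=\int_{\Omega}a^{ij}D_{i}u\,\overline{D_{j}v}\,dQ+\int_{\Omega}\rho\,\mathfrak{D}^{\alpha}u\,\bar v\,dQ = t[u,v],
\]
so $\mathfrak{f}\subset t$ on $\mathrm{D}(L)\times\mathrm{D}(L)$, and the same extends to $\tilde{L}$-limits since inequality \eqref{40} forces $\mathrm{D}(\tilde{L})\subset H^{1}_{0}(\Omega)=\mathrm{D}(t)$, so by continuity of $t$ in its form norm, $\mathfrak{f}\subset t$. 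Closedness of $t$ then gives $\tilde{\mathfrak{f}}\subset t$. Second, I will establish the reverse inclusion by density: $C_{0}^{\infty}(\Omega)\subset \mathrm{D}(L)$ is dense in $H^{1}_{0}(\Omega)$, and since the form norm of $t$ is equivalent to the $H^{1}_{0}$-norm (from the ellipticity bound on $\mathfrak{Re}\,t_1$ and the relative bound on $t_2$), this density upgrades to density in the form norm; hence every $u\in\mathrm{D}(t)$ lies in $\mathrm{D}(\tilde{\mathfrak{f}})$ with matching values.

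The main technical hurdle is making the relative bound in Step 2 genuinely small. The estimate \eqref{3} carries the $\delta$-parameter precisely for this purpose, but one must check that the exponent $\nu$ in \eqref{4} is compatible with the choice $p=2$ and some admissible $q$ satisfying \eqref{2}; once this is verified and combined with a Sobolev embedding, Young's inequality absorbs the $H^{1}_{0}$-norm into $t_1$. Everything else is bookkeeping built on the first representation theorem and the closure arguments used in Theorem \ref{T7}.
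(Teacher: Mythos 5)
Your argument is correct, and the identification $t=\tilde{\mathfrak{f}}$ at the end (Green's formula \eqref{35} on $\mathrm{D}(L)$, passage to $\tilde{L}$-limits, then density of $C_0^{\infty}(\Omega)$ in the form norm) is essentially what the paper does, with the density step stated more explicitly than in the paper's rather terse appeal to Theorem 1.17 of Kato. Where you genuinely diverge is in how closedness and sectoriality of $t$ are obtained. The paper does not split $t$: it proves the two-sided bound $C_{0}\|f\|^{2}_{H^{1}_{0}}\leq|t[f]|\leq C_{1}\|f\|^{2}_{H^{1}_{0}}$ directly (the lower bound from \eqref{41} together with Theorem \ref{T5}, which forces ${\rm Re}\,(\rho\,\mathfrak{D}^{\alpha}u,u)\geq 0$; the upper bound from \eqref{45}, \eqref{46}), and then reads off closedness from the completeness of $H^{1}_{0}(\Omega)$, citing the sector computation already carried out in Theorem \ref{T6}. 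You instead package the same two ingredients — ellipticity of $t_{1}$ and the Kipriyanov estimate \eqref{3} with the free parameter $\delta$ — as a relative form-bound $0$ for $t_{2}$ with respect to $t_{1}$ and invoke Kato's perturbation theorem (VI.1.33). Both routes rest on identical estimates; yours buys a cleaner modular structure and makes transparent exactly why the $\delta$-parameter in \eqref{3} is needed (to drive the relative bound below $1$, which requires $\nu<1$, i.e.\ $q$ in the admissible range of \eqref{2} — a point you rightly flag), while the paper's direct route avoids the perturbation machinery and yields the explicit constant $C_{0}$ that is reused later in \eqref{59}--\eqref{60} and in Lemma \ref{L5}. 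Two small points of hygiene: in the relative-bound inequality the dominating term should be ${\rm Re}\,t_{1}[u]$ rather than $t_{1}[u]$ (which is complex when $a^{ij}$ is not symmetric), and the embedding $L_{2}(\Omega)\hookrightarrow L_{q'}(\Omega)$ for $q'<2$ is just H\"older on a finite-measure domain, not a Sobolev embedding; neither affects the argument.
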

 \begin{proof}
First we shall show  that the following    inequality   holds
\begin{equation}\label{54}
C_{0}\|f\|^{2} _{H^{1}_{0}(\Omega)}\leq  \left|t[f ]\right|\leq C_{1}\|f\|^{2} _{H^{1}_{0}(\Omega)},\,f\in H^{1}_{0}(\Omega).
\end{equation}
Using \eqref{41}, Theorem \ref{T5},   we obtain
\begin{equation}\label{55}
C_{0}\|f\|^{2} _{H^{1}_{0}(\Omega)}\leq   {\rm Re} t[f ] \leq\left|t[f ]\right|,\;f\in H^{1}_{0} (\Omega).
\end{equation}
Applying \eqref{45},\eqref{46}, we get
\begin{equation}\label{56}
|t[f]|\leq\left|\left(a^{ij}D_if,D_jf\right)_{\!L_{2}(\Omega)}\!\right|+\left|\left(\rho \,\mathfrak{D}^{\alpha} f, f\right)_{\!L_{2}(\Omega)}\!\right| \leq C_{1}\|f\|^{2}_{H^{1}_{0}(\Omega)},\,f\in H^{1}_{0} (\Omega).
\end{equation}
Note  that $ H^{1}_{0}(\Omega) \subset \mathrm{D}( \tilde{t}) .$  If $f\in \mathrm{D}( \tilde{t} ),$ then in accordance with the definition,  there exists  a sequence
$
\{f_{n}\}\subset \mathrm{D}( t),\, f_{n}\xrightarrow[t ]{ }f.
$
Applying   \eqref{54}, we get  $ f_{n}\xrightarrow[  ]{ H^{1}_{0}}f .$
Since the space $H^{1}_{0}(\Omega)$ is complete, then   $\mathrm{D}( \tilde{t}) \subset H^{1}_{0}(\Omega).$ It implies that $\mathrm{D}( \tilde{t})=\mathrm{D}( t).$ Hence   $t$ is a closed form.
The proof of the sectorial property contains in the proof of Theorem \ref{T6}. Let us prove that $t=\mathfrak{\tilde{f}}.$  First, we shall    show that
\begin{equation}\label{57}
\mathfrak{ \mathfrak{f} }[u,v]=t[u,v],\;u,v\in \mathrm{D}(\mathfrak{f}).
\end{equation}
 Using   formula \eqref{35}, we have
\begin{equation}\label{58}
( L u ,v )_{L_{2}(\Omega)}=t[u ,v ],\;u,v\in \mathrm{D}(L).
 \end{equation}
 Hence  we can rewrite    relation \eqref{54} in the following form
\begin{equation}\label{59}
C_{0}\|f\|^{2} _{H^{1}_{0}(\Omega)}\leq  \left|( L f,f)_{L_{2}(\Omega)}\right|\leq C_{1}\|f\|^{2} _{H^{1}_{0}(\Omega)},\;f\in \mathrm{D}(L).
\end{equation}
 Assume that $f \in \mathrm{D}(\tilde{L}),$ then there exists a sequence $\{f_{n}\}\in \mathrm{D}( L ),\,f_{n}\xrightarrow[L]{}f.$   Combining   \eqref{59},\eqref{54}, we obtain
   $f_{n}\xrightarrow[t]{}f.$   These facts give us an opportunity to pass to the limit on the left and right side of \eqref{58}. Thus, we obtain \eqref{57}. Combining  \eqref{57},\eqref{54}, we get
\begin{equation}\label{60}
C_{0}\|f\|^{2} _{H^{1}_{0}(\Omega)}\leq  \left|\mathfrak{ f }[f ]\right|\leq C_{1}\|f\|^{2} _{H^{1}_{0}(\Omega)},\;f\in \mathrm{D}(\mathfrak{f}).
\end{equation}
Note that by virtue  of Theorem \ref{T6} the operator $\tilde{L}$ is sectorial, hence due to Theorem 1.27 \cite[p.399] {firstab_lit:kato1966} the form $\mathfrak{f}$ is closable. Using the facts established above, Theorem 1.17 \cite[p.395] {firstab_lit:kato1966},   passing to the limit  on the left and right side of inequality \eqref{57}, we get
\begin{equation*}
\mathfrak{ \mathfrak{\tilde{f}} }[u,v]=t[u,v],\;u,v\in H_{0}^{1}(\Omega).
\end{equation*}
This concludes the proof.
 \end{proof}

\begin{lemma}\label{L5}
The form h is a closed symmetric sectorial form, moreover  $h=\mathfrak{\tilde{k}}, $ where
 \begin{equation*}
 \mathfrak{ k }[u,v]=(\tilde{H}u,v)_{L_{2}(\Omega)},\;u,v\in \mathrm{D}(\tilde{H}).
 \end{equation*}
\end{lemma}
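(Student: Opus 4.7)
The plan is to follow very closely the scheme used in the proof of Lemma \ref{L4}, exploiting the fact that $h=\mathfrak{Re}\,t$ inherits most of its analytic structure from the form $t$ already studied. First I would observe that $h$ is symmetric by construction (since $h=\tfrac{1}{2}(t+t^{*})$ and $t^{*}[u,v]=\overline{t[v,u]}$ implies $h[u,v]=\overline{h[v,u]}$), and that on the diagonal one has the crucial identity $h[f]={\rm Re}\,t[f]$, which will be the main bridge to all previously established estimates.

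Next I would establish the two-sided equivalence
\begin{equation*}
C_{0}\|f\|^{2}_{H^{1}_{0}(\Omega)}\leq h[f]\leq C_{1}\|f\|^{2}_{H^{1}_{0}(\Omega)},\quad f\in H^{1}_{0}(\Omega).
\end{equation*}
The lower bound is just inequality \eqref{55} from the proof of Lemma \ref{L4} applied to ${\rm Re}\,t[f]=h[f]$. For the upper bound I would use $|h[f]|\leq\tfrac{1}{2}(|t[f]|+|t^{*}[f]|)$; since $t^{*}[f,f]=\overline{t[f,f]}$ the two summands coincide, so the upper bound in \eqref{54} delivers the claim. From this equivalence the argument of Lemma \ref{L4} shows at once that $\mathrm{D}(\tilde h)=H^{1}_{0}(\Omega)$, so $h$ is closed; combined with its symmetry this also yields the (trivial) sectorial property with vertex $\mu_{1}>0$ and semi-angle $0$.

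Third, I would verify the identity $\mathfrak{k}[u,v]=h[u,v]$ for $u,v\in\mathrm{D}(\tilde H)$. On $\mathrm{D}(L)\cap\mathrm{D}(L^{+})$ Green's formula \eqref{35} together with Lemma \ref{L2} (which identifies the adjoint of $\mathfrak{D}^{\alpha}_{0+}$ with $\mathfrak{D}^{\alpha}_{d-}$) shows that $(Lu,v)_{L_{2}(\Omega)}=t[u,v]$ and $(L^{+}u,v)_{L_{2}(\Omega)}=t^{*}[u,v]$, hence $(Hu,v)_{L_{2}(\Omega)}=h[u,v]$. Using the fact that $\tilde H$ is the closure of $H$ (Theorem \ref{T7}, which in fact gives that $\tilde H$ is selfadjoint), I would approximate any $u\in\mathrm{D}(\tilde H)$ by a sequence $u_{n}\in\mathrm{D}(H)$ with $u_{n}\xrightarrow[H]{}u$, and conclude via the $H^{1}_{0}$–equivalence already proved that $u_{n}\to u$ in $H^{1}_{0}(\Omega)$, whence passing to the limit yields $\mathfrak{k}[u,v]=h[u,v]$ on $\mathrm{D}(\tilde H)$.

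Finally, since $\tilde H$ is selfadjoint, the form $\mathfrak{k}$ is closable (Kato, \cite{firstab_lit:kato1966}, Theorem 1.27, p.\,399), and Theorem 1.17 (ibid.\ p.\,395) together with the bound \eqref{54} applied to $\mathfrak{k}$ gives the equivalence
\begin{equation*}
C_{0}\|f\|^{2}_{H^{1}_{0}(\Omega)}\leq |\mathfrak{k}[f]|\leq C_{1}\|f\|^{2}_{H^{1}_{0}(\Omega)},\quad f\in\mathrm{D}(\mathfrak{k}),
\end{equation*}
so the closure $\tilde{\mathfrak{k}}$ has domain $H^{1}_{0}(\Omega)$ and coincides with $h$ on a form-core, hence everywhere. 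The main obstacle, as in Lemma \ref{L4}, is the verification of the identity $(\tilde H u,v)_{L_{2}(\Omega)}=h[u,v]$ on $\mathrm{D}(\tilde H)$: one must be careful that the approximation in the graph norm of $\tilde H$ can be upgraded to convergence in $H^{1}_{0}(\Omega)$, which is precisely where the sectorial estimate \eqref{43} and the two-sided bound established above play their essential role.
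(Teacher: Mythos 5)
Your proposal is correct and follows essentially the same route as the paper's own proof: symmetry from $h=\tfrac12(t+t^{*})$, the two-sided bound $C_{0}\|f\|^{2}_{H^{1}_{0}}\leq h[f]\leq C_{1}\|f\|^{2}_{H^{1}_{0}}$ via $h[f]={\rm Re}\,t[f]$, closedness by repeating the Lemma \ref{L4} argument, the identity $(Hu,v)_{L_{2}(\Omega)}=h[u,v]$ from Green's formula \eqref{35} together with Lemmas \ref{L1}--\ref{L2}, and the passage to $\tilde{\mathfrak{k}}=h$ via Kato's Theorems 1.27 and 1.17. The only cosmetic differences (deriving the upper bound from $t^{*}[f]=\overline{t[f]}$ rather than directly from \eqref{56}, and invoking selfadjointness rather than sectoriality of $\tilde{H}$ for closability of $\mathfrak{k}$) do not change the substance of the argument.
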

\begin{proof}
To prove  the  symmetric  property   (see(1.5) \cite[p.387] {firstab_lit:kato1966}) of the form $h,$  it is  sufficient to note that
$$
h[u,v]=\frac{1}{2}\left(t[u,v]+ \overline{t[v,u]}  \right)=\frac{1}{2}\overline{\left( t[v,u] +\overline{t[u,v]}\right)}=\overline{h[v,u]},\;u,v\in \mathrm{D}(h).
$$
Obviously, we have
$
h[f]= {\rm Re}\,t[f].
$
Hence applying    \eqref{55},   \eqref{56},   we have
\begin{equation}\label{61}
C_{0}\|f\| _{H^{1}_{0}(\Omega)}\leq  h[f ] \leq C_{1}\|f\| _{H^{1}_{0}(\Omega)},\;f\in H^{1}_{0}(\Omega).
\end{equation}
 Arguing as above, using \eqref{61}, it is easy to  prove   that $\mathrm{D}(\tilde{h})=H_{0}^{1}(\Omega). $ Hence the form $h$ is a closed form. The  proof of the sectorial property of the form $h$ contains in the proof of Theorem \ref{T6}.
Let us  prove that $h=\mathfrak{\tilde{k}}.$ We shall  show that
\begin{equation}\label{62}
\mathfrak{ \mathfrak{k} }[u,v]=h[u,v],\,u,v\in \mathrm{D}(\mathfrak{k}).
\end{equation}
Applying \ref{L1}, Lemma \ref{L2}, we have
$$
(\rho\,\mathfrak{D}^{\alpha}f,g)_{L_{2}(\Omega)}=(f,\mathfrak{D}_{d-}^{\alpha}\rho g)_{L_{2}(\Omega)},\,f,g\in H_{0}^{1}(\Omega).
$$
Combining this fact with formula \eqref{35}, it is not hard to prove that
\begin{equation}\label{63}
( H u,v)_{L_{2}(\Omega)}=h[u,v],\;u,v\in \mathrm{D}(H).
\end{equation}
Using \eqref{63}, we can rewrite  estimate \eqref{61}  as  follows
\begin{equation}\label{64}
C_{0}\|f\| _{H^{1}_{0}(\Omega)}\leq    ( H f,f)_{L_{2}(\Omega)} \leq C_{1}\|f\| _{H^{1}_{0}(\Omega)},\;f\in \mathrm{D}(H).
\end{equation}
Note that in consequence of  Remark  \ref{L3} the  operator $H$ is closeable. Assume that  $f \in \mathrm{D}(\tilde{H}),$    then there  exists a sequence  $\{f_{n}\}\subset \mathrm{D}(H),\,f_{n}\xrightarrow[H]{}f .$   Combining  \eqref{64},\eqref{61}, we obtain
 $f_{n}\xrightarrow[h]{}f.$    Passing to the limit on the left and right side of \eqref{63}, we get \eqref{62}.
 Combining  \eqref{62},\eqref{61}, we obtain
\begin{equation}\label{65}
C_{0}\|f\| _{H^{1}_{0}(\Omega)}\leq   \mathfrak{ k }[f ] \leq C_{1}\|f\| _{H^{1}_{0}(\Omega)},\;f\in \mathrm{D}(\mathfrak{k}).
\end{equation}
 Note that in consequence of Theorem \ref{T6} the operator $\tilde{H}$ is sectorial. Hence by virtue of    Theorem 1.27 \cite[p.399] {firstab_lit:kato1966} the form $\mathfrak{k}$ is closable.
Using the proven  above facts, Theorem 1.17 \cite[p.395] {firstab_lit:kato1966},  passing to the limits on the left and right side of inequality \eqref{62}, we get
\begin{equation*}
 \mathfrak{ \mathfrak{\tilde{k}} }[u,v]  = h[u,v],\,u,v\in H_{0}^{1}(\Omega).
\end{equation*}
This completes the proof.
\end{proof}

\begin{theorem}\label{T8}
The operator $\tilde{H}$ has a compact resolvent,    the following estimate   holds
\begin{equation}\label{66}
\lambda_{n}(L_{0})\leq\lambda_{n}(\tilde{H})\leq\lambda_{n}(L_{1}),\,n\in\mathbb{N},
\end{equation}
where $\lambda_{n}(L_{k}),\,k=0,1$ are respectively  the eigenvalues of the following   operators with real  constant coefficients 
\begin{equation}\label{67}
L_{k}f=-  a_{ k }^{ij} D _{j}D_{i}f +\rho_{k}f,\,\mathrm{D}(L_{k})=\mathrm{D}(L),
$$
$$
a_{ k }^{ij}\xi_{i}\xi_{j}>0,\,\rho_{k}>0.
\end{equation}
\end{theorem}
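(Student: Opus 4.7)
The plan is to work with the closed symmetric sectorial form $h$ from Lemma \ref{L5}, whose form domain is $H_{0}^{1}(\Omega)$ and which represents $\tilde H$ in the sense $(\tilde Hu,v)_{L_{2}(\Omega)}=h[u,v]$ on $\mathrm{D}(\tilde H)$. Inequality \eqref{61}, together with the Friedrichs inequality, shows that the form norm $(h[f]+\|f\|_{L_{2}}^{2})^{1/2}$ is equivalent to the $H_{0}^{1}$-norm. Consequently, the natural embedding $\mathrm{D}(\tilde H^{1/2})\hookrightarrow L_{2}(\Omega)$ factors through the Rellich--Kondrachov compact embedding $H_{0}^{1}(\Omega)\hookrightarrow L_{2}(\Omega)$. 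By a standard criterion (e.g.\ Theorem VI.3.3 of \cite{firstab_lit:kato1966}) this makes $(\tilde H-\zeta)^{-1}$ a compact operator for every $\zeta$ in the resolvent set, so the spectrum is discrete and the eigenvalues form an increasing sequence $\lambda_{n}(\tilde H)\to\infty$.

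For the two-sided bound I would invoke the Courant--Fischer min-max principle for the self-adjoint operator $\tilde H$ (bounded below by \eqref{43} and with compact resolvent),
\begin{equation*}
\lambda_{n}(\tilde H)=\min_{\substack{V\subset H_{0}^{1}(\Omega)\\\dim V=n}}\max_{f\in V\setminus\{0\}}\frac{h[f]}{\|f\|_{L_{2}(\Omega)}^{2}}.
\end{equation*}
The same formula holds for the constant-coefficient self-adjoint operators $L_{k}$ of \eqref{67}, with $h$ replaced by $\ell_{k}[f]=\int_{\Omega}a_{k}^{ij}D_{i}f\overline{D_{j}f}\,dQ+\rho_{k}\|f\|_{L_{2}(\Omega)}^{2}$ on the same domain $H_{0}^{1}(\Omega)$. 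The sandwich \eqref{66} therefore reduces to constructing constant coefficients obeying \eqref{67} and the pointwise form inequality $\ell_{0}[f]\leq h[f]\leq \ell_{1}[f]$ for every $f\in H_{0}^{1}(\Omega)$.

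The lower sandwich is immediate: take $a_{0}^{ij}=a_{0}\delta_{ij}$ with the ellipticity constant $a_{0}$ from \eqref{33} and $\rho_{0}=\mu\inf\rho>0$ (positive by the standing assumption $\mu>0$). Inequality \eqref{37} handles the principal part, while Theorem \ref{T5} gives $\mathrm{Re}(f,\mathfrak{D}^{\alpha}f)_{L_{2}(\Omega,\rho)}\geq\mu\inf\rho\cdot\|f\|_{L_{2}(\Omega)}^{2}$, so $\ell_{0}[f]\leq h[f]$. For the upper sandwich, estimate \eqref{45} yields $\mathrm{Re}\int a^{ij}D_{i}f\overline{D_{j}f}\,dQ\leq a_{1}\|f\|_{L_{2}^{1}(\Omega)}^{2}$ with $a_{1}=\sup a(Q)$, and the absorption estimate \eqref{46} applied to the fractional term (in both orderings, as in the derivation of \eqref{48}) gives, for a small parameter $\varepsilon>0$,
\begin{equation*}
\bigl|\mathrm{Re}(\rho\,\mathfrak{D}^{\alpha}f,f)_{L_{2}(\Omega)}\bigr|\leq\varepsilon C_{3}\|f\|_{L_{2}^{1}(\Omega)}^{2}+C_{\varepsilon}\|f\|_{L_{2}(\Omega)}^{2}.
\end{equation*}
Setting $a_{1}^{ij}=(a_{1}+\varepsilon C_{3})\delta_{ij}$ and $\rho_{1}=C_{\varepsilon}$ meets the hypotheses of \eqref{67} and delivers $h[f]\leq \ell_{1}[f]$. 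Monotonicity of the min-max ratio on the common form domain $H_{0}^{1}(\Omega)$ then yields \eqref{66}.

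The main obstacle is the upper sandwich: one must dominate the non-symmetric lower-order fractional term by a Dirichlet form plus an $L_{2}$ potential in such a way that the resulting principal symbol $a_{1}^{ij}$ remains strictly positive definite. The small-parameter freedom provided by the Kipriyanov estimate \eqref{3}, which underlies \eqref{46}, is precisely what makes this absorption possible, while the lower bound merely reuses Theorem \ref{T5} and is essentially immediate.
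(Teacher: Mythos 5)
Your proposal is correct and follows essentially the same route as the paper: the paper phrases the argument via Mikhlin's energetic spaces $\mathfrak{H}_{L_{0}},\mathfrak{H}_{\tilde{H}},\mathfrak{H}_{L_{1}}$ and the comparison theorem for positive-definite operators, which is exactly your form sandwich $\ell_{0}[f]\leq h[f]\leq \ell_{1}[f]$ plus Courant--Fischer, and it likewise gets compactness from Rellich--Kondrashov applied to the form domain $H_{0}^{1}(\Omega)$. The only difference is that you explicitly construct $L_{0},L_{1}$ (via Theorem \ref{T5} for the lower bound and the absorption estimate \eqref{46} for the upper one), where the paper simply asserts in \eqref{70} that such operators obviously exist.
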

\begin{proof}
  First, we shall prove  the following      propositions \\
i) {\it The  operators $\tilde{H},L_{k}$ are positive-definite.} Using the fact that   the operator $H$ is selfadjoint, relation \eqref{52.0}, we conclude that
the  operator  $\tilde{H}$ is positive-definite. Using the definition, we can easily prove    that the operators $L_{k}$ are   positive-definite.   \\
ii) {\it The space $H^{1}_{0}(\Omega)$ coincides  with the energetic spaces  $\mathfrak{H}_{\tilde{H}},\mathfrak{H}_{L_{k}}$  as a set of   elements.} Using   Lemma \ref{L5}, we have
  \begin{equation}\label{68}
 \|f\| _{\mathfrak{H}_{\tilde{H}}}=\tilde{\mathfrak{k}}[f]=h[f],\;f\in H_{0}^{1}(\Omega).
\end{equation}
Hence the space  $\mathfrak{H}_{\tilde{H}}$ coincides with $H^{1}_{0}(\Omega)$ as a set of   elements. Using this fact,
we obtain   the coincidence of the spaces $H^{1}_{0}(\Omega)$ and $\mathfrak{H}_{L_{k}}$ as the particular case. \\
iii){\it We have the following estimates
\begin{equation}\label{69}
 \|f\| _{\mathfrak{H} _{L_{0}}}\leq \|f\| _{\mathfrak{H}_{\tilde{H}}}\leq  \|f\| _{\mathfrak{H} _{L_{1}}},\,f\in H^{1}_{0}(\Omega).
\end{equation}
}
We obtain the equivalence  of the   norms $\|\cdot\|_{H_{0}^{1}}$ and $\|\cdot\|_{\mathfrak{H} _{L_{k}}}$ as the particular case of relation \eqref{54}.
It is obvious that there   exist  such   operators  $L_{k}$    that the next  inequalities hold
\begin{equation}\label{70}
\|f\| _{\mathfrak{H} _{L_{0}}}\leq C_{0}\|f\| _{H^{1}_{0}(\Omega)},\;C_{1}\|f\| _{H^{1}_{0}(\Omega)}\leq\|f\| _{\mathfrak{H} _{L_{1}}},\,f\in H^{1}_{0}(\Omega).
\end{equation}
Combining \eqref{61},\eqref{68},\eqref{70}, we get   \eqref{69}.

 Now we can prove the   proposal  of this theorem.  Note that   the operators $\tilde{H},$ $L_{k}$  are positive-definite,    the   norms 
 $\|\cdot\|_{H_{0}^{1}}, \|\cdot\|_{\mathfrak{H} _{L_{k}}}, \|\cdot\|_{\mathfrak{H}_{\tilde{H}}}$     are equivalent.   Applying  the  Rellich-Kondrashov  theorem, we have that the energetic  spaces
  $\mathfrak{H}_{\tilde{H}},\;\mathfrak{H} _{L_{k}}$  are compactly embedded into $L_{2}(\Omega).$ Using Theorem 3 \cite[p.216]{firstab_lit:mihlin1970}, we obtain  the fact  that  the operators $ L_{0} ,L_{1},\tilde{H}$ have  a discrete   spectrum.
 Taking into account   (i),(ii),(iii), in accordance with the definition \cite[p.225]{firstab_lit:mihlin1970}, we have
$$
L_{0}\leq \tilde{H} \leq L_{1}.
$$
 Applying     Theorem 1 \cite[p.225]{firstab_lit:mihlin1970},   we obtain  \eqref{66}.
 Note that by virtue of Theorem \ref{T7} the operator $\tilde{H}$ is m-accretive. Hence $0\in P(\tilde{H}).$ Due to    Theorem 5 \cite[p.222]{firstab_lit:mihlin1970} the operator $\tilde{H}$ has a compact resolvent at the point zero.
 Applying    Theorem 6.29 \cite[p.237]{firstab_lit:kato1966}, we conclude that the  operator $\tilde{H}$ has a compact resolvent.

\end{proof}

\begin{theorem}
Operator $\tilde{L}$ has a compact resolvent,   discrete spectrum.
\end{theorem}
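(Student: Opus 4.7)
The plan is to deduce compactness of the resolvent of $\tilde{L}$ from the variational structure already set up in Lemma \ref{L4} and Theorem \ref{T7}, and then invoke standard spectral theory of compact operators to get discreteness. The key observation is that although $\tilde{L}$ is non-selfadjoint, the form $t$ associated with it has form domain exactly $H^{1}_{0}(\Omega)$, which by Rellich--Kondrashov is compactly embedded in $L_{2}(\Omega)$; this is the same compactness that underlies Theorem \ref{T8} for $\tilde{H}$.

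First I would record that $\gamma>0$, combined with Theorem \ref{T7}, implies $0\in P(\tilde{L})$, so $\tilde{L}^{-1}$ exists as a bounded operator on $L_{2}(\Omega)$. Next, using Lemma \ref{L4} together with relation \eqref{43} and the Friedrichs inequality (essentially the bound \eqref{54} already proved), I would observe that for every $g\in L_{2}(\Omega)$, the function $f:=\tilde{L}^{-1}g\in \mathrm{D}(\tilde{L})\subset H^{1}_{0}(\Omega)$ satisfies
\begin{equation*}
C_{0}\|f\|^{2}_{H^{1}_{0}(\Omega)}\leq {\rm Re}\,t[f]={\rm Re}\,(\tilde{L}f,f)_{L_{2}(\Omega)}={\rm Re}\,(g,f)_{L_{2}(\Omega)}\leq \|g\|_{L_{2}(\Omega)}\|f\|_{L_{2}(\Omega)}.
\end{equation*}
Together with the Friedrichs inequality $\|f\|_{L_{2}(\Omega)}\leq C\|f\|_{H^{1}_{0}(\Omega)}$, this gives $\|\tilde{L}^{-1}g\|_{H^{1}_{0}(\Omega)}\leq C\|g\|_{L_{2}(\Omega)}$; i.e.\ $\tilde{L}^{-1}$ is a bounded operator from $L_{2}(\Omega)$ into $H^{1}_{0}(\Omega)$.

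Next I would factor the resolvent as the composition
\begin{equation*}
\tilde{L}^{-1}\colon L_{2}(\Omega)\xrightarrow{\,\text{bdd}\,} H^{1}_{0}(\Omega)\xrightarrow{\,\text{compact}\,} L_{2}(\Omega),
\end{equation*}
where the second arrow is the Rellich--Kondrashov compact embedding. A bounded operator composed with a compact operator is compact, so $\tilde{L}^{-1}$ is compact on $L_{2}(\Omega)$. Applying Theorem 6.29 of \cite{firstab_lit:kato1966} (as used at the end of the proof of Theorem \ref{T8}), compactness of the resolvent at the single point $0$ propagates to every point of the resolvent set $P(\tilde{L})$.

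Finally, discreteness of $\sigma(\tilde{L})$ follows from the classical spectral theory of operators with compact resolvent: the spectrum of $\tilde{L}$ consists entirely of eigenvalues of finite algebraic multiplicity, having no finite accumulation point, which is exactly the discreteness claim. I do not expect any real obstacle here; the only delicate point is making sure that the form-theoretic identity $t[f]=(\tilde{L}f,f)$ on $\mathrm{D}(\tilde{L})$ used in the first display is applied within the domain of the operator and not merely of the form, but this is immediate from Lemma \ref{L4} via the first representation theorem.
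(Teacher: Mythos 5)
Your proof is correct, but it takes a genuinely different route from the paper's. The paper proceeds through the sesquilinear-form machinery: it identifies $\tilde{L}$ and $\tilde{H}$ with the operators $T_{t}$, $T_{h}$ associated with the forms $t$ and $h$ (Kato, Theorem 2.9, p.~409), observes that $\tilde{H}$ is the \emph{real part} of the m-sectorial operator $\tilde{L}$ in Kato's sense, and then invokes Kato's Theorem 3.3 (p.~424), which says that an m-sectorial operator has a compact resolvent if and only if its real part does; compactness for $\tilde{H}$ is supplied by Theorem \ref{T8}. You instead bypass $\tilde{H}$ and Theorem \ref{T8} entirely: the coercivity ${\rm Re}\,t[f]\geq C_{0}\|f\|^{2}_{H^{1}_{0}(\Omega)}$ from Lemma \ref{L4} (inequality \eqref{55}), applied to $f=\tilde{L}^{-1}g$, bounds $\tilde{L}^{-1}$ as a map $L_{2}(\Omega)\to H^{1}_{0}(\Omega)$, and Rellich--Kondrashov then makes $\tilde{L}^{-1}$ compact on $L_{2}(\Omega)$ by factorization; Kato's Theorem 6.29 and Riesz--Schauder theory finish the job exactly as in the paper. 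Your argument is the more elementary and self-contained of the two (it needs only the form coercivity and the compact embedding, not the real-part theorem nor the energetic-space comparison underlying Theorem \ref{T8}); the paper's route has the advantage of tying the compactness statement to the two-sided eigenvalue estimates for $\tilde{H}$, which are the paper's main quantitative result. The two delicate points you flag --- that $0\in P(\tilde{L})$ (which follows from $\gamma>0$ and m-sectoriality established in Theorem \ref{T7}) and that $t[f]=(\tilde{L}f,f)_{L_{2}(\Omega)}$ may be used on $\mathrm{D}(\tilde{L})$ (immediate since $t=\tilde{\mathfrak{f}}\supset\mathfrak{f}$ by Lemma \ref{L4}) --- are both handled correctly.
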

\begin{proof}
Note that in accordance with Theorem \ref{T7} the operators $\tilde{L},\tilde{H}$ are m-sectorial, the operator $\tilde{H}$  is self-adjoint. Applying Lemma \ref{L4}, Lemma \ref{L5}, Theorem 2.9 \cite[p.409]{firstab_lit:kato1966}, we get   $T_{t}=\tilde{L},\;T_{h}=\tilde{H},$ where $T_{t},T_{h}$ are the Fridrichs  extensions    of the operators $\tilde{L}, \tilde{H}$ (see \cite[p.409]{firstab_lit:kato1966}) respectively. Since in accordance
with  the   definition \cite[p.424]{firstab_lit:kato1966} the operator $\tilde{H}$ is a real part of the operator $\tilde{L},$ then due to  Theorem
\ref{T8}, Theorem 3.3 \cite[p.424]{firstab_lit:kato1966} the operator $\tilde{L}$ has a compact resolvent. Applying  Theorem 6.29 \cite[p.237]{firstab_lit:kato1966}, we conclude that the operator $\tilde{L}$ has a     discrete   spectrum.
\end{proof}
\begin{remark}
 It can easily be checked   that the Kypriaynov operator   is reduced  to the Marchaud operator in the one-dimensional case. At the same time,
 the results of this work are only true   for  the  dimensions   $ n\geq2.$
 However,   using   Corollary 1 \cite{firstab_lit:1kukushkin2017}, which establishes the strictly  accretive property  of the Marchaud  operator, we can apply the obtained  technique to  the one-dimensional case.
\end{remark}

\section{Conclusions}

  The paper  presents  the results  obtained in   the spectral theory of  fractional differential  operators. A  number of   propositions of  independent interest in the fractional calculus theory  are  proved, the  new concept   of a multidimensional directional fractional integral    is introduced.   The sufficient conditions of the representability   by the directional  fractional integral are formulated.  In particular,   the inclusion of  the Sobolev space to  the class  of   functions  that are  representable  by the directional  fractional integral  is established. Note that the technique of the  proofs, which is analogous to  the one-dimensional case, is of particular interest. It should be noted that the  extension of the Kipriyanov fractional differential  operator  is obtained, the  adjoint  operator is found, and the strictly accretive property is proved. These all create   a complete description  reflecting   qualitative properties of   fractional differential operators.    As the main   results, the following  theorems establishing the  properties of an uniformly elliptic operator with  the  Kipriyanov fractional derivative in the final term   are  proved:  the  theorem on the  strictly  accretive  property,  the theorem on the  sectorial  property,  the theorem on the m-accretive  property, the  theorem  establishing   a two-sided estimate for the  eigenvalues  and    discreteness of the spectrum of the  real component. Using  the  sesquilinear forms theory,  we  obtained the  major  theoretical results. We consider the proofs corresponding to the multidimensional case, however the reduction to the one-dimensional case is possible. For instance, the one-dimensional case is described in the paper  \cite {firstab_lit:2kukushkin2017}. We also note  that the results in this direction can be obtained  for the real axis. It is worth noticing  that the application of the  sesquilinear forms theory,  as a tool to study second order differential operators with a fractional derivative in the final term,  gives  an opportunity to analyze  the major role  of the senior term in the functional properties of the operator. This technique is novel  and can be used for  studying  the spectrum of  perturbed  fractional differential operators. Therefore,
  the idea of the proof may be of interest regardless of the results.

\subsection*{Acknowledgments}
  The author  thanks Professor Alexander L. Skubachevskii for valuable remarks and comments made during the  report,  which took place 31.10.2017 at Peoples' Friendship University of Russia, Moscow.

\newpage


\begin{thebibliography}{99}


\bibitem{firstab_lit:1Aleroev1984}  T.S. Aleroev;   \emph{Spectral analysis of one class of non-selfadjoint operators},
  Differential Equations,  \textbf{20}, No.1 (1984),  171-172.


 \bibitem{firstab_lit:Aleroev1989} T.S. Aleroev, B.I. Aleroev;   \emph{On eigenfunctions and eigenvalues of one non-selfadjoint operator},
  Differential Equations,  \textbf{25}, No.11 (1989),  1996-1997.


\bibitem{firstab_lit:1Aleroev1994} T.S. Aleroev;  \emph{On eigenvalues of  one  class of non-selfadjoint operators},
  Differential Equations,  \textbf{30}, No.1 (1994),  169-171.




\bibitem{firstab_lit:3Aleroev2000} T.S. Aleroev;  \emph{On the  eigenfunctions  system  completeness   of one dierential operator of fractional order},
Differential Equations,  \textbf{36}, No.6 (2000), 829-830.





\bibitem{firstab_lit:fridrichs1958}
 K. Friedrichs \emph{Symmetric positive linear differential equations},  Comm. Pure Appl. Math.,   \textbf{11} (1958),   238-241.



\bibitem{firstab_lit:1Jrbashyan1970}  M.M. Jrbashyan;  \emph{Boundary value problem for the fractional differential operator of the Sturm-Liouville type},
  Proceedings of еру   Academy of Sciences of the Armenian SSR,  \textbf{5}, No.2 (1970), 37-47.

\bibitem{firstab_lit:kato1961}  T. Kato; \emph{Fractional powers of dissipative operators},
J. Math. Soc. Japan, \textbf{13}, No.3 (1961), 246-274.
\bibitem{firstab_lit:kato1966}T. Kato; \emph{Perturbation theory for linear operators},  Springer-Verlag Berlin, Heidelberg, New York, 1966.


 \bibitem{firstab_lit:kipriyanov1960}
 I.A. Kipriyanov;    \emph{On spaces of fractionally differentiable functions},  Proceedings of the Academy of Sciences of  USSR,  \textbf{24} (1960),  665-882.

\bibitem{firstab_lit:1kipriyanov1960}
I.A. Kipriyanov;    \emph{The operator of fractional differentiation and powers of elliptic operators},   Proceedings of the Academy of Sciences of USSR,    \textbf{131} (1960),     238-241.

 \bibitem{firstab_lit:2.1kipriyanov1960}
 I.A. Kipriyanov;    \emph{On some properties of the fractional derivative in the direction},  Proceedings of the universities. Math., USSR,  No.2 (1960),  32-40.

 \bibitem{firstab_lit:2.2kipriyanov1960}
 I.A. Kipriyanov;    \emph{On the complete continuity of embedding operators  in the  spaces of fractionally differentiable functions},  Russian Mathematical Surveys,  \textbf{17} (1962),  183-189.



\bibitem{firstab_lit:2kukushkin2017} M.V. Kukushkin;  \emph{Evaluation of the eigenvalues of the Sturm-Liouville problem for a differential operator with the fractional derivative in the final term},
  Belgorod State University Scientific Bulletin, Math. Physics.,  \textbf{46}, No.6 (2017),  29-35.

\bibitem{firstab_lit:1kukushkin2017} M.V. Kukushkin;  \emph{Theorem on bounded embedding of the energetic space generated by the Marchaud  fractional differential operator   on the axis},
 Belgorod State University Scientific Bulletin, Math. Physics.,  \textbf{48}, No.20 (2017),  24-30.

 \bibitem{firstab_lit:1.1kukushkin2017}   M.V. Kukushkin;  \emph{On some qulitative properties of the Kipriyanov  fractional differential  operator},
 Vestnik of Samara  University, Natural Science Series, Math.,  \textbf{23}, No.2  (2017),  32-43.









\bibitem{firstab_lit:Lyakhov2014}  L.N. Lyakhov, I.P. Polovinkin, E.L.Shishkina;   \emph{On  one problem of I. A. Kipriyanov  for a singular ultrahyperbolic equation},
  Differential Equations,  \textbf{50}, No.4 (2014)  513-525.


\bibitem{firstab_lit:mihlin1970} S.G. Mihlin;    \emph{Variational methods in mathematical physics},  Moscow Science, 1970.


\bibitem{firstab_lit:1Nakhushev1977}  A.M. Nakhushev;   \emph{ The Sturm-Liouville problem for ordinary differential equation of second order with fractional derivatives in the final  terms},
  Proceedings of the Academy of Sciences of the USSR,  \textbf{234}, No.2 (1977),  308-311.

\bibitem{firstab_lit:1Reutskiy2016} S.Yu. Reutskiy;  \emph{A novel method for solving second order fractional eigenvalue problems},
 Journal of Computational and Applied Mathematics,  \textbf{306}, (2016),  133-153.

\bibitem{firstab_lit:samko1987}
S.G. Samko, A.A. Kilbas, O.I. Marichev;  \emph{Integrals and derivatives of fractional order and some of their applications},  Nauka i Tekhnika, Minsk, 1987.





\end{thebibliography}
\end{document}